\theoremstyle{plain}
\newtheorem{theorem}{Theorem}[section]
\newtheorem{corollary}[theorem]{Corollary}
\newtheorem{lemma}[theorem]{Lemma}
\newtheorem{proposition}[theorem]{Proposition}
\theoremstyle{definition}
\newtheorem{definition}[theorem]{Definition}
\newtheorem{remark}[theorem]{Remark}
\numberwithin{equation}{section}
\newcommand{\R}{\mathbb{R}}
\newcommand{\Sf}{\mathbb{S}}
\begin{document}

\title[Curvature properties of $\varphi$-null Osserman Lorentzian $\mathcal{S}$-manifolds]
{Curvature properties of $\varphi$-null Osserman Lorentzian $\mathcal{S}$-manifolds}
\author{Letizia Brunetti and Angelo V. Caldarella}

\address{Department of Mathematics, University of Bari\newline%
\indent Via E.~Orabona, 4 -- I 70125 Bari (Italy)}%
\email[L.~Brunetti]{brunetti@dm.uniba.it}%
\email[A.V.~Caldarella]{caldarella@dm.uniba.it}%

\date{\today}
\subjclass{53C50, 53C25, 53B30}
\keywords{Lorentz manifold. Semi-Riemannian manifold. Jacobi operator. Osserman condition. Indefinite $\mathcal{S}$-manifold.}%

\begin{abstract}
We expound some results about the relationships between the Jacobi operators with respect to null vectors on a Lorentzian $\mathcal{S}$-manifold $M$ and the Jacobi operators with respect to particular spacelike unit vectors on $M$. We study the number of the eigenvalues of such operators in a $\varphi$-null Osserman Lorentzian $\mathcal{S}$-manifold, under suitable assumptions on the dimension of the manifold. Then, we generalize a curvature characterization, previously obtained by the first author for Lorentzian $\varphi$-null Osserman $\mathcal{S}$-manifolds with exactly two characteristic vector fields, to the case of those with an arbitrary number of characteristic vector fields.
\end{abstract}

\maketitle

\section{Introduction}
The Jacobi operator is one of the main objects of study in Riemannian and semi-Riemannian Geometry, due to the consequences of its behaviour on several geometrical properties of a manifold.

Let $(M,g)$ be an $n$-dimensional semi-Riemannian manifold of signature $(h,k)$, with $h+k=n$, where we first put the plus signs and then the minus signs. If $k=0$ then $M$ is a Riemannian manifold, while if $k=1$, $M$ is Lorentzian. For any $p\in M$, the unit spacelike (resp.\ timelike) tangent sphere at $p$ is the set $S^+_p(M)=\{z\in T_pM\,|\,g_p(z,z)=+1\}$ (resp.\ $S^-_p(M)=\{z\in T_pM\,|\,g_p(z,z)=-1\}$), and the unit spacelike (resp.\ timelike) sphere bundle is $S^+(M)=\bigcup_{p\in M}S^+_p(M)$ (resp.\ $S^-(M)=\bigcup_{p\in M}S^-_p(M)$). We put $S_p(M)=S^+_p(M)\cup S^-_p(M)$ and $S(M)=\bigcup_{p\in M}S_p(M)$. Note that for a Riemannian manifold one has $S_p(M)=S^+_p(M)$ and $S(M)=S^+(M)$.

For any $z\in S_p(M)$, $p\in M$, the \emph{Jacobi operator with respect to $z$} is the endomorphism $R_z\colon z^{\bot}\rightarrow z^{\bot}$ defined by $R_z(\cdot)=R_p(\cdot,z)z$ (see, for example, \cite{GKV}), where $R$ is the $(1,3)$-type curvature tensor field of $(M,g)$.

The Jacobi operator is a self-adjoint map, thus it is diagonalizable in the Riemannian case, and the study of its eigenvalues has a special interest: for example, they indicate the extreme values of the sectional curvatures of the manifold. The Jacobi operator is involved in geodesic deformations and it plays a central role in the Jacobi equations. It is worth noting that, in the Riemannian case, the eigenvalues of the Jacobi operators $R_z$ depend both on the vector $z\in S_p(M)$ and on the point $p\in M$. On the other side, in the semi-Riemannian context, dealing with the coefficients of the characteristic polynomial of $R_z$ is better than dealing with its eigenvalues, due to the diagonalization problems in the indefinite case. It is easy to see that a Riemannian manifold has constant sectional curvature $c$ if and only if the Jacobi operators $R_z$ have exactly one constant eigenvalue $\lambda=c$, for any $z\in S(M)$. Then it appears natural to study what happens if the Jacobi operators of a Riemannian manifold admit eigenvalues independent both of the vector $z\in S_p(M)$ and of the point $p\in M$, that is when $(M,g)$ is an \emph{Osserman manifold}. In the indefinite setting, a semi-Riemannian manifold $(M,g)$ is said to be \emph{spacelike} (resp.\ \emph{timelike}) \emph{Osserman}, if the characteristic polynomial of $R_z$ is independent both of $z\in S^+_p(M)$ (resp.\ $z\in S^-_p(M)$) and $p\in M$. In \cite{GKVV} it is proved that $(M,g)$ being spacelike Osserman is equivalent to $(M,g)$ being timelike Osserman. For a general account on the geometry of Osserman manifolds, we refer the reader to \cite{GKV}.

It is known that any locally flat or locally rank-one symmetric space is an Osserman manifold, while the converse statement is known as the Osserman Conjecture, proposed by R.~Osserman in \cite{OS} (see also \cite{Oss}). Several results have been obtained in search of the solution of the Conjecture: in the Riemannian setting it was proved by Q.S.~Chi in many cases (\cite{C0}, \cite{C1}, \cite{C2}), while more recently Y.~Nikolayevsky provided results for almost any case which is not covered by Q.S.~Chi (see \cite{N01}, \cite{N02}, \cite{N03}).

The Osserman problem was also considered in the Lorentzian setting by E.~Garc\'ia-R\'io, D.N.~Kupeli and M.E.~V\'azquez-Abal (\cite{GK}, \cite{GKVA}), who together with N.~Bla\v{z}i\'c, N.~Bokan and P.~Gilkey (\cite{BBG}) gave a complete affirmative answer to the Osserman Conjecture, by proving that a Lorentzian manifold is Osserman if and only if it has constant sectional curvature. They first considered the spacelike and timelike cases separately, and now a new and simple proof of both cases is provided in \cite{GKV}.

In the case of metrics with indefinite arbitrary signature, there are several counterexamples to the Conjecture (see for example \cite{BBR}, \cite{BCGHV}, \cite{GVV}).

Very recently, some new conditions of Osserman type have been introduced and studied in \cite{AD} and \cite{ALT}, where the authors, based on \cite{AET}, deal with suitable (pseudo) Jacobi operators associated with the degenerate metric induced on lightlike hypersurfaces and lightlike submanifolds of semi-Riemannian manifolds.

Now, we concentrate our interest in the Lorentzian case, where the Osserman problem is completely solved. In \cite{GKVA} the attention has been focused on the Osserman conditions related with null (lightlike) vectors. In that paper, the authors first introduce the Jacobi operator $\bar{R}_u$ with respect to a null vector $u$ and then they define and study the so-called \emph{null Osserman condition} with respect to a unit timelike vector (see also \cite{GKV}).

Lorentzian almost contact manifolds are also interesting in relation to the null Osserman conditions. It is known that any Lorentz Sasakian manifold $(M,\varphi,\xi,\eta,g)$ is globally null Osserman with respect to the timelike characteristic vector field $\xi$, if the manifold has constant $\varphi$-sectional curvature. Yet, as seen in \cite{BR}, a similar result fails when considering Lorentzian $\mathcal{S}$-manifolds with constant $\varphi$-sectional curvature. In Section $3$ we prove that, more generally, no Lorentzian $\mathcal{S}$-manifold can be null Osserman, removing the hypothesis of constant $\varphi$-sectional curvature. On the other side, a Lorentzian $\mathcal{S}$-manifold can never be Osserman, because such manifolds do not have constant sectional curvature. This has led the first author to introduce and study in \cite{BR} a new kind of null Osserman condition, called \emph{$\varphi$-null Osserman condition}, which seems to be more convenient for manifolds carrying Lorentzian globally framed $f$-structures. It reduces to the classical null Osserman condition when considering Lorentzian almost contact structures.

In this paper we are going to proceed further in this direction. As main results, in Section $3$, we obtain a link between the behaviour of the Jacobi operators with respect to null vectors on a Lorentzian $\mathcal{S}$-manifold $(M,\varphi,\xi_\alpha,\eta^\alpha,g)$ and the Jacobi operators with respect to unit vectors in $\mathrm{Im}(\varphi)$. Then we use it to prove a result on the number of the eigenvalues of these operators for a $\varphi$-null Osserman Lorentzian $\mathcal{S}$-manifold, under suitable assumptions on the dimension of the manifold, in analogy to some results obtained by Q.S.~Chi (\cite{C0}) for Osserman Riemannian manifolds. In Section $4$ we give an extension of the curvature results of \cite{BR} to the case of an arbitrary $\varphi$-null Osserman Lorentzian $\mathcal{S}$-manifold. We first provide a remarkable characterization of a suitable class of curvature-like maps on a Lorentzian $g.f.f$-manifold, and then, using this result, we prove a curvature characterization for $\varphi$-null Osserman Lorentzian $\mathcal{S}$-manifolds with an arbitrary number of characteristic vector fields.

In what follows, all manifolds, tensor fields and maps are assumed to be smooth. Moreover, all manifolds are supposed to be connected and, according to \cite{KN}, for the Riemannian curvature tensor $R$ of a semi-Riemannian manifold $(M,g)$ we put
\[
R(X,Y,Z,W)=g(R(Z,W)Y,X)=g(([\nabla_Z,\nabla_W]-\nabla_{[Z,W]})Y,X),
\]
for any vector fields $X,Y,Z,W$ on $M$. It is well-known that the following fundamental symmetries hold
\[
\begin{split}
&R(X,Y,Z,W)=R(Z,W,X,Y),\\
&R(X,Y,Z,W)=-R(Y,X,Z,W)=-R(X,Y,W,Z),\\
&R(X,Y,Z,W)+R(Y,Z,X,W)+R(Z,X,Y,W),
\end{split}
\]
for any $X,Y,Z,W\in\Gamma(TM)$. More generally, if $p\in M$, any multilinear map $F\colon T_p(M)^4\rightarrow\R$ is said to be a \emph{curvature-like map} if it satisfies the above symmetries (\cite{O'N}). For any linearly independent vectors $x,y\in T_pM$ spanning a non-degenerate plane $\pi=\mathrm{span}(x,y)$, that is $\Delta(\pi)=g_p(x,x)g_p(y,y)-g_p(x,y)^2\neq0$, the \emph{sectional curvature of $F$ with respect to $\pi$} is, by definition, the real number
\[
K(\pi)=K(x,y)=\frac{F(x,y,x,y)}{\Delta(\pi)}.
\]
It is well-known that the sectional curvature $K(\pi)$ is independent of the non-degenerate plane $\pi$ if, and only if, $F(x,y,z,w)=k(g(x,z)g(y,w)-g(y,z)g(x,w))$, for all $x,y,z,w\in T_p(M)$, with $K(\pi)=k\in\R$ (\cite[p. 80]{O'N}). More generally, a special feature of semi-Riemannian manifolds is that the sectional curvature on non-degenerate planes can be linked to the behaviour of the curvature with respect to degenerate planes, as provided in \cite[Thm.\ 5]{DN} and \cite[p.\ 229]{O'N}. Since the proof of the cited results only involves the algebraic symmetries of the Riemannian curvature tensor, we report the result here, for later use, stated for any curvature-like map on a Lorentzian manifold.

\begin{lemma}\label{degenere}
Let $(M,g)$ be a Lorentzian manifold and $F\colon(T_pM)^4\rightarrow\R$ a curvature-like map, $p\in M$. The following conditions are equivalent.
\begin{itemize}
	\item[(a)] $F(x,y,z,w)=k(g(x,z)g(y,w)-g(y,z)g(x,w))$, for all $x,y,z,w\in T_p(M)$, with $k\in\R$;
	\item[(b)] $F(x,y,x,y)=0$, for any degenerate plane $\pi=span\{x,y\}$ in $T_p(M)$.
\end{itemize}
\end{lemma}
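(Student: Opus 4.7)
The direction (a) $\Rightarrow$ (b) is immediate: when $\pi=\mathrm{span}\{x,y\}$ is degenerate one has $\Delta(\pi)=g(x,x)g(y,y)-g(x,y)^{2}=0$, and evaluating the identity in (a) at $z=x$, $w=y$ gives $F(x,y,x,y)=k\,\Delta(\pi)=0$. The content lies in the converse, and my plan is to reduce it to the classical algebraic fact recalled just before the statement, that constancy of the sectional curvature of $F$ on every non-degenerate plane at $p$ forces the expression in (a).

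The first step is a polarization of (b). Fix a null vector $u\in T_pM$ and arbitrary $x,y\in u^\perp$: since $g(u,u)=g(u,x)=g(u,y)=0$, each of the Gram matrices of $\mathrm{span}\{u,x\}$, $\mathrm{span}\{u,y\}$ and $\mathrm{span}\{u,x+y\}$ is singular, so (b) makes $F(u,x,u,x)$, $F(u,y,u,y)$ and $F(u,x+y,u,x+y)$ vanish. Expanding the last by bilinearity and using the curvature symmetry $F(u,y,u,x)=F(u,x,u,y)$, I obtain
\[
F(u,x,u,y)=0 \qquad\text{for every null } u \text{ and every } x,y\in u^\perp.
\]
The second, main step is to promote this to the whole set of non-degenerate planes. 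Choosing a null basis $\{u,v\}$ of a timelike $2$-plane with $g(u,v)=1$ together with a spacelike orthonormal basis $\{e_1,\dots,e_{n-2}\}$ of its orthogonal complement, I consider the one-parameter family of null vectors $w_\alpha:=u+\alpha e_i-\tfrac{\alpha^{2}}{2}v$. For $j\neq i$ the vector $e_j$ lies in $w_\alpha^\perp$, so the polarized identity forces $F(w_\alpha,e_j,w_\alpha,e_j)=0$ identically in $\alpha$; extracting the coefficients of each power of $\alpha$ produces
\[
F(u,e_j,e_i,e_j)=0,\quad F(v,e_j,e_i,e_j)=0,\quad F(e_i,e_j,e_i,e_j)=F(u,e_j,v,e_j),
\]
and the last identity, together with its analogue obtained by swapping the roles of $i$ and $j$, shows that $F(e_i,e_j,e_i,e_j)$ is a common constant $k\in\R$ for all $i\neq j$. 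Iterating the argument with more elaborate null families (built from several of the $e_i$) determines the remaining mixed values and shows that $K(\pi)=k$ on every non-degenerate plane at $p$; the cited algebraic fact then yields (a).

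The main obstacle is precisely this second step: the polarized identity only controls $F$ on quadruples where a null vector occupies two of the four slots with orthogonal-complement partners, so care is needed to choose families of null vectors that jointly constrain the spacelike sectional curvatures $F(e_i,e_j,e_i,e_j)$, the mixed quantities $F(u,e_j,v,e_j)$ and the timelike value $F(u,v,u,v)$. A conceptually cleaner alternative would be purely algebraic: $H(x,y):=F(x,y,x,y)$ and $Q(x,y):=g(x,x)g(y,y)-g(x,y)^{2}$ are bihomogeneous polynomials of bidegree $(2,2)$ on $T_pM\times T_pM$; (b) says $\{Q=0\}\subseteq\{H=0\}$, and a proof of the irreducibility of $Q$ over $\R$ in Lorentzian signature (valid for $\dim M\geq 2$) forces $H=k\,Q$ by the real Nullstellensatz, after which polarization in the four arguments recovers (a).
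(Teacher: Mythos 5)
First, note that the paper itself offers no proof of this lemma: it is imported from \cite{DN} and \cite{O'N} with the remark that the argument is purely algebraic, so your attempt can only be measured against the classical Dajczer--Nomizu proof. Your overall strategy is the right one and essentially coincides with it: the direction (a)$\Rightarrow$(b) is indeed immediate, the polarization giving $F(u,x,u,y)=0$ for null $u$ and $x,y\in u^{\perp}$ is correct (the case $x\in\mathrm{span}(u)$ being covered by antisymmetry), and your expansion of $F(w_\alpha,e_j,w_\alpha,e_j)$ in powers of $\alpha$ does yield exactly the three identities you state; together with the $i\leftrightarrow j$ swap they give $F(e_i,e_j,e_i,e_j)=F(u,e_i,v,e_i)=F(u,e_j,v,e_j)=k$ for all $i\neq j$, at least when $\dim M\geqslant 4$.

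The proof is nevertheless incomplete, and the gap is exactly where you locate it. What you have actually proved is that $K(\pi)=k$ for planes spanned by two vectors of one fixed adapted basis, and not even all of those: the timelike plane $\mathrm{span}\{u,v\}$ is never reached, because your construction always keeps $e_j$ orthogonal to $w_\alpha$ and so never produces the component $F(u,v,u,v)$; moreover none of the components $F(e_i,e_j,e_k,e_l)$, $F(u,e_i,e_j,e_k)$, $F(u,v,e_i,e_j)$, $F(u,e_i,v,e_j)$ with $i\neq j$ is controlled, so $F(x,y,x,y)$ cannot yet be evaluated on an arbitrary non-degenerate plane. ``Iterating the argument with more elaborate null families'' is a declaration of intent rather than a proof, and it is precisely this step that carries the content of the lemma. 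The efficient way to close it is to work with arbitrary vectors rather than a basis: for a unit timelike $z$, a unit spacelike $y\perp z$ and any $w$ orthogonal to both, the two null vectors $z\pm y$ satisfy $F(z\pm y,w,z\pm y,w)=0$, whence $F(z,w,y,w)=0$ and $F(z,w,z,w)=-F(y,w,y,w)$; from these identities one deduces that $K$ takes the same value on every non-degenerate plane, and only then does the algebraic fact quoted in the introduction apply. Your alternative route via the real Nullstellensatz is likewise only sketched: it requires the irreducibility of $Q$ over $\R$ (which fails for $\dim M=2$, where $Q$ is the negative of a perfect square) together with an argument that vanishing of $H$ on the real zero set of $Q$ forces $Q\mid H$, neither of which you supply.
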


\section{Preliminaries}
Let us recall some basic definitions and facts about contact and $\mathcal{S}$-structures which we will need in the rest of the paper.

Following \cite{Bl1}, an \emph{almost contact structure} on a $(2n+1)$-dimensional manifold $M$ is, by definition, a triple $(\varphi,\xi,\eta)$, where $\varphi$ is a $(1,1)$-type tensor field on $M$, $\xi$ a vector field and $\eta$ a $1$-form such that $\varphi^2=-I+\eta\otimes\xi$ and $\eta(\xi)=1$, where $I\colon TM\rightarrow TM$ is the identity mapping. From the previous conditions one deduces that $\varphi\xi=0$ and $\eta\circ\varphi=0$. Moreover, the endomorphism $\varphi$ has constant rank $2n$ and, for any $p\in M$, one has the splitting $T_pM=\mathrm{Im}(\varphi_p)\oplus\mathrm{span}(\xi_p)$. The condition $\eta=0$ defines the $2n$-dimensional non-integrable distribution $\ker(\eta)=\mathrm{Im}(\varphi)$, called the \emph{contact distribution}, while $\xi$ is called the \emph{characteristic vector field} of the almost contact structure. An almost contact manifold $(M,\varphi,\xi,\eta)$ is said to be \emph{normal} if the $(1,2)$-type tensor field $N=[\varphi,\varphi]+2d\eta\otimes\xi$ vanishes identically, where $[\varphi,\varphi]$ is the Nijenhuis tensor field of $\varphi$, defined by $[\varphi,\varphi](X,Y)=\varphi^2[X,Y]+[\varphi X,\varphi Y]-\varphi[\varphi X,Y]-\varphi[X,\varphi Y]$, for any $X,Y\in\Gamma(TM)$.

An indefinite metric tensor $g$ on an almost contact manifold $(M,\varphi,\xi,\eta)$ is said to be \emph{compatible} with the almost contact structure $(\varphi,\xi,\eta)$ if
\begin{equation}\label{eq:01}
	g(\varphi X,\varphi Y) = g(X,Y)-\varepsilon\eta(X)\eta(Y),
\end{equation}
for all $X,Y\in\Gamma(TM)$, where $\varepsilon=g(\xi,\xi)=\pm1$. Then, the manifold $M$ is said to be an \emph{indefinite almost contact metric manifold} with structure $(\varphi,\xi,\eta,g)$. From (\ref{eq:01}) it follows easily that $g(X,\xi)=\varepsilon\eta(X)$ and $g(X,\varphi Y)=-g(\varphi X,Y)$, for any $X,Y\in\Gamma(TM)$, as well as that $\mathrm{Im}(\varphi)$ is orthogonal to the distribution $\langle\xi\rangle$ spanned by $\xi$. The $2$-form $\Phi$ on $M$ defined by $\Phi(X,Y)=g(X,\varphi Y)$ is called the \emph{fundamental}, or the \emph{Sasakian $2$-form} of the indefinite almost contact metric manifold. If $\Phi=d\eta$, the manifold $(M,\varphi,\xi,\eta,g)$ is said to be an \emph{indefinite contact metric manifold}. Finally, a normal indefinite contact metric manifold is, by definition, an \emph{indefinite Sasakian manifold}. It is known that an indefinite almost contact metric manifold is indefinite Sasakian if and only if the covariant derivative of $\varphi$ satifies $(\nabla_X\varphi)Y=g(X,Y)\xi-\varepsilon\eta(Y)X$, with $\varepsilon=g(\xi,\xi)=\pm1$. It follows easily that $\nabla_X\xi=-\varepsilon\varphi X$ and that $\xi$ is a Killing vector field. Standard reference for contact structures in the Riemannian case is \cite{Bl1}, while for the indefinite case the reader is referred to \cite{D} and \cite{T}.

In \cite{Na01} and \cite{Na02}, H.~Nakagawa introduced a generalization of the above structures with the notion of framed $f$-manifold, later developed and studied by S.I.~Goldberg and K.~Yano (\cite{GY}, \cite{GY02}) and others with the denomination of globally framed $f$-manifolds. A manifold $M$ is said to be a \emph{globally framed $f$-manifold} (briefly \emph{$g.f.f$-manifold}) if it carries a globally framed $f$-structure, that is a non-vanishing $(1,1)$-type tensor field $\varphi$ on $M$ of constant rank satisfying $\varphi^3+\varphi=0$, and such that the subbundle associated with the distribution $\ker(\varphi)$ is parallelizable. If $\dim(\ker(\varphi))=s\geqslant1$, this is equivalent to the existence of $s$ linearly independent vector fields $\xi_\alpha$ and $1$-forms $\eta^\alpha$, $\alpha\in\{1,\ldots,s\}$, such that
\begin{equation}\label{eq:02}
\varphi^2=-I+\eta^\alpha\otimes\xi_\alpha\qquad\text{and}\qquad\eta^\alpha(\xi_\beta)=\delta^\alpha_\beta,
\end{equation}
where $I$ is the identity mapping. Clearly, if $s=1$ we get an almost contact structure. From (\ref{eq:02}) it follows that $\varphi\xi_\alpha=0$ and $\eta^\alpha\circ\varphi=0$, for any $\alpha\in\{1,\ldots,s\}$. The conditions $\eta^1=\dots=\eta^s=0$ define the $2n$-dimensional distribution $\mathrm{Im}(\varphi)$ on which $\varphi$ acts as an almost complex tensor field. One has the splitting $T_pM=\mathrm{Im}(\varphi_p)\oplus\mathrm{span}((\xi_1)_p,\ldots,(\xi_s)_p)$, for any $p\in M$, and $\dim(M)=2n+s$. Each $\xi_\alpha$ is said to be a \emph{characteristic vector field} of the structure. A $g.f.f$-manifold $(M,\varphi,\xi_\alpha,\eta^\alpha)$, $\alpha\in\{1,\ldots,s\}$, is called \emph{normal} if the $(1,2)$-type tensor field $N=[\varphi,\varphi]+2d\eta^\alpha\otimes\xi_\alpha$ vanishes identically.

From now on, a $(2n+s)$-dimensional $g.f.f$-manifold will be simply denoted by $(M,\varphi,\xi_\alpha,\eta^\alpha)$, leaving the condition $\alpha\in\{1,\ldots,s\}$ understood.

Following \cite{LP}, an indefinite metric $g$ on a $g.f.f$-manifold $(M,\varphi,\xi_\alpha,\eta^\alpha)$ is said to be \emph{compatible} with the $g.f.f$-structure $(\varphi,\xi_\alpha,\eta^\alpha)$ if
\begin{equation}\label{eq:03}
	g(\varphi X,\varphi Y) = g(X,Y)-\sum_{\alpha=1}^s\varepsilon_\alpha\eta^\alpha(X)\eta^\alpha(Y),
\end{equation}
for all $X,Y\in\Gamma(TM)$, where $\varepsilon_\alpha=g(\xi_\alpha,\xi_\alpha)=\pm1$. Then, the manifold $M$ is said to be an \emph{indefinite metric $g.f.f$-manifold} with structure $(\varphi,\xi_\alpha,\eta^\alpha,g)$. From (\ref{eq:03}) it follows that $g(X,\xi_\alpha)=\varepsilon_\alpha\eta^\alpha(X)$ and $g(X,\varphi Y)=-g(\varphi X,Y)$, for any $X,Y\in\Gamma(TM)$, as well as that $\mathrm{Im}(\varphi)$ is orthogonal to the distribution  $\langle\xi_1,\ldots,\xi_s\rangle$ spanned by the vector fields $\xi_\alpha$, $\alpha\in\{1,\ldots,s\}$. The $2$-form $\Phi$ on $M$ defined by $\Phi(X,Y)=g(X,\varphi Y)$ is called the \emph{fundamental $2$-form} of the indefinite metric $g.f.f$-manifold. If $\Phi=d\eta^\alpha$, for any $\alpha\in\{1,\ldots,s\}$, the manifold $(M,\varphi,\xi_\alpha,\eta^\alpha,g)$ is said to be an \emph{indefinite almost $\mathcal{S}$-manifold}. Finally, a normal indefinite almost $\mathcal{S}$-manifold is, by definition, an \emph{indefinite $\mathcal{S}$-manifold}. In \cite{LP} it is proved that in an indefinite $\mathcal{S}$-manifold the covariant derivative of $\varphi$ satisfies $(\nabla_X\varphi)Y=g(X,Y)\tilde\xi+\tilde\eta(Y)\varphi^2X$, where $\tilde\xi=\sum_{\alpha=1}^s\xi_\alpha$ and $\tilde\eta=\sum_{\alpha=1}^s\varepsilon_\alpha\eta^\alpha$. It follows that, for any $\alpha,\beta\in\{1,\ldots,s\}$,
\begin{equation}\label{eq:16}
\nabla_X\xi_\alpha=-\varepsilon_\alpha\varphi X \qquad \text{and} \qquad \nabla_{\xi_\alpha}\xi_\beta=0,
\end{equation}
as well as that each $\xi_\alpha$ is a Killing vector field. In particular, for $s=1$ one finds again the notion of indefinite Sasakian manifold (\cite{T}).

For further properties on $\mathcal{S}$-manifolds, in the Riemannian context, we refer the reader to \cite{Bl01}, \cite{BLY}, \cite{CFF} and \cite{DIP}, where the notion of almost $\mathcal{S}$-manifold is introduced, while the generalization to the semi-Riemannian setting is given in \cite{LP}.

We have the following useful formulas for the Riemannian curvature tensor of an indefinite $\mathcal{S}$-manifold.

\begin{lemma}\label{lem:99}
Let $(M,\varphi,\xi^\alpha,\eta_\alpha,g)$ be a $(2n+s)$-dimensional indefinite $\mathcal{S}$-manifold, $s\geqslant 1$. The following identities hold, for any $X,Y,Z\in\Gamma(TM)$, any $U,V\in\langle\xi_1,\ldots,\xi_s\rangle$ and any $\alpha,\beta,\gamma\in\{1,\ldots,s\}$.
\begin{enumerate}
	\item $R(X,Y,\xi_\alpha,Z)=\varepsilon_\alpha\left\{\tilde\eta(X)g(\varphi Y,\varphi Z)-\tilde\eta(Y)g(\varphi X,\varphi Z)\right\}$;
	\item $R(\xi_\beta,Y,\xi_\alpha,Z)=\varepsilon_\beta\varepsilon_\alpha g(\varphi Y,\varphi Z)$;
	\item $R(\xi_\beta,\xi_\gamma,\xi_\alpha,Z)=0$;
	\item $R(\varphi X,\varphi Y,\xi_\alpha,Z)=0$;
	\item $R(U,Y,V,Z)=\tilde\eta(U)\tilde\eta(V)g(\varphi Y,\varphi Z)$.
\end{enumerate}
where, $\varepsilon_\alpha=g(\xi_\alpha,\xi_\alpha)=\pm1$ for any $\alpha\in\{1,\ldots,s\}$, and $\tilde\eta=\sum_{\alpha=1}^{s}\varepsilon_\alpha\eta^\alpha$.
\end{lemma}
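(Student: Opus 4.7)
The plan is to establish identity (1) first via a direct computation of $R(X,Y)\xi_\alpha$, and then deduce identities (2)--(5) as corollaries by specializing the arguments of (1) to characteristic vector fields or to sections of the contact distribution $\mathrm{Im}(\varphi)$.

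To prove (1), I would start by unwinding the definition:
\[
R(X,Y)\xi_\alpha=\nabla_X\nabla_Y\xi_\alpha-\nabla_Y\nabla_X\xi_\alpha-\nabla_{[X,Y]}\xi_\alpha.
\]
Using the identity $\nabla_Z\xi_\alpha=-\varepsilon_\alpha\varphi Z$ from (\ref{eq:16}), each term becomes $-\varepsilon_\alpha$ times a covariant derivative of $\varphi$ applied to a vector field. Expanding $\nabla_X(\varphi Y)=(\nabla_X\varphi)Y+\varphi\nabla_X Y$ and similarly for the other terms, the "$\varphi\nabla$" pieces collect into $-\varepsilon_\alpha\,\varphi([X,Y]-[X,Y])=0$, leaving
\[
R(X,Y)\xi_\alpha=-\varepsilon_\alpha\bigl((\nabla_X\varphi)Y-(\nabla_Y\varphi)X\bigr).
\]
Now I substitute the structure formula $(\nabla_X\varphi)Y=g(X,Y)\tilde\xi+\tilde\eta(Y)\varphi^2 X$ recorded just before the lemma. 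The symmetric term $g(X,Y)\tilde\xi$ cancels by antisymmetry in $X,Y$, and I am left with
\[
R(X,Y)\xi_\alpha=\varepsilon_\alpha\bigl(\tilde\eta(X)\varphi^2 Y-\tilde\eta(Y)\varphi^2 X\bigr).
\]
To finish (1), I use the convention $R(X,Y,\xi_\alpha,Z)=-g(R(X,Y)\xi_\alpha,Z)$ (coming from $R(X,Y,Z,W)=R(Z,W,X,Y)$ and antisymmetry in the last two slots), and the identity $g(\varphi^2 A,B)=-g(\varphi A,\varphi B)$, which is immediate from $g(\varphi\cdot,\cdot)=-g(\cdot,\varphi\cdot)$. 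These substitutions deliver exactly the right-hand side in (1).

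The remaining identities then follow by pure specialization and a short linearity argument, with no further calculus:
\emph{(2)} set $X=\xi_\beta$ in (1), using $\tilde\eta(\xi_\beta)=\varepsilon_\beta$ and $\varphi\xi_\beta=0$;
\emph{(3)} set $Y=\xi_\gamma$ in (2), again invoking $\varphi\xi_\gamma=0$;
\emph{(4)} replace $X,Y$ in (1) by $\varphi X,\varphi Y$ and use $\tilde\eta\circ\varphi=0$, which follows from $\eta^\alpha\circ\varphi=0$;
\emph{(5)} expand $U=\sum_\beta u^\beta\xi_\beta$, $V=\sum_\alpha v^\alpha\xi_\alpha$, apply (2) term by term, and observe that $\tilde\eta(U)=\sum_\beta\varepsilon_\beta u^\beta$ and analogously for $V$, so that $\sum_{\alpha,\beta}\varepsilon_\alpha\varepsilon_\beta u^\beta v^\alpha=\tilde\eta(U)\tilde\eta(V)$.

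The only step requiring genuine computation is (1); I expect the main (mild) obstacle there to be bookkeeping the sign and slot conventions when passing from the vector-valued curvature $R(X,Y)\xi_\alpha$ to the $(0,4)$-tensor $R(X,Y,\xi_\alpha,Z)$, together with the correct handling of the $\varepsilon_\alpha$ signs arising from $g(\xi_\alpha,\xi_\alpha)=\varepsilon_\alpha=\pm1$. Once these bookkeeping issues are settled, everything else is a routine specialization of (1).
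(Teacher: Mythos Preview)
Your proposal is correct and follows exactly the route the paper takes: it computes $R(X,Y)\xi_\alpha$ directly from $\nabla_Z\xi_\alpha=-\varepsilon_\alpha\varphi Z$ and the formula for $(\nabla_X\varphi)Y$ to obtain (1), and then derives (2)--(4) as immediate specializations of (1) and (5) from (2) by bilinearity. The paper's proof is just the one-line summary of this computation, and your bookkeeping of the sign conventions (in particular $R(X,Y,\xi_\alpha,Z)=-g(R(X,Y)\xi_\alpha,Z)$ under the paper's $(0,4)$-convention) is handled correctly.
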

\proof With straightforward calculations, using (\ref{eq:16}), one gets $(1)$. The identities $(2)$, $(3)$ and $(4)$ are easy consequences of $(1)$, while $(5)$ follows from $(2)$. \endproof

In particular we have
\begin{equation}\label{eq:04}
 R(X,Y,Z,\xi_\alpha)=0 \qquad\text{and}\qquad R(X,\xi_\alpha,Y,\xi_\beta)=\varepsilon_\alpha\varepsilon_\beta g(X,Y),
\end{equation}
for any $X,Y,Z\in\mathrm{Im}(\varphi)$ and any $\alpha,\beta\in\{1,\ldots,s\}$.

Throughout the paper, given a $(2n+s)$-dimensional indefinite $g.f.f$-manifold $(M,\varphi,\xi_\alpha,\eta^\alpha,g)$, we always put $\tilde\xi=\sum_{\alpha=1}^s\xi_\alpha$ and $\tilde\eta=\sum_{\alpha=1}^s\varepsilon_\alpha\eta^\alpha$, with $\varepsilon_\alpha=g(\xi_\alpha,\xi_\alpha)=\pm1$, according to the causal character of $\xi_\alpha$.

\section{Lorentzian $\mathcal{S}$-manifolds and the $\varphi$-null Osserman condition.}
Let $(M,g)$ be a Lorentzian manifold and $p\in M$. Following \cite{GKVA} and \cite{GKV}, if $u\in T_pM$ is a lightlike (or null) vector, that is $u\neq0$ and $g_p(u,u)=0$, since $\mathrm{span}(u)\subset u^{\bot}$, we consider the quotient space $\bar{u}^{\bot}=u^{\bot}/\mathrm{span}(u)$, together with the canonical projection $\pi\colon u^{\bot}\rightarrow\bar{u}^{\bot}$. A positive definite inner product $\bar{g}$ can be defined on $\bar{u}^{\bot}$ by putting $\bar{g}(\bar{x},\bar{y})=g_p(x,y)$, where $\pi(x)=\bar{x}$ and $\pi(y)=\bar{y}$, so that $(\bar{u}^{\bot},\bar{g})$ becomes an Euclidean vector space. The \emph{Jacobi operator with respect to $\bar u$} is the endomorphism $\bar R_u\colon\bar u^\bot \rightarrow\bar u^\bot$ defined by $\bar R_u(\bar x)=\pi(R_p(x,u)u)$, for all $\bar x=\pi(x)\in\bar u^\bot$. It is easy to see that $\bar R_u$ is self-adjoint, hence diagonalizable.

Any subspace $W\subset u^\bot$ such that $u^\bot=\mathrm{span}(u)\oplus W$ is called a \emph{geometrical realization} of $\bar u^\bot$. It is a non-degenerate subspace and $\pi|_W\colon(W,g_p)\rightarrow (\bar u^\bot,\bar g)$ is an isometry, so that we can identify $(\bar u^\bot,\bar g)\cong(W,g_p)$.

If $z\in T_pM$ is a unit timelike vector, the \emph{null congruence set} of $z$ at $p$ is
\[
N(z)=\{u\in T_pM\,|\,g_p(u,u)=0,\ g_p(u,z)=-1\}.
\]
Since $z$ is timelike, the space $(T_pM,g_p)$ being Lorentzian yields $g_p(u,z)\neq0$ for any lightlike vector $u\in T_pM$, hence $N(z)$ is a non-empty set. Moreover, note that the elements of $N(z)$ are in one-to-one correspondence to those of the set $S(z)=\{v\in z^\bot\,|\,g_p(v,v)=1\}$, called the \emph{celestial sphere of $z$},
via the map $\psi\colon N(z)\rightarrow S(z)$ such that $\psi(u)=u-z$ (see \cite{GKV}).

\begin{definition}[\cite{GKVA,GKV}]
Let $(M,g)$ be a Lorentzian manifold, $p\in M$ and $z\in T_pM$ a timelike unit vector. Then, $(M,g)$ is said to be \emph{null Osserman with respect to $z$} if the eigenvalues of $\bar R_u$ and their multiplicities are independent of $u\in N(z)$.
\end{definition}

Let $(M,\varphi,\xi_\alpha,\eta^\alpha,g)$ be a Lorentzian $g.f.f$-manifold, with $\dim(M)=2n+s$, $s\geqslant1$. It is always possible to consider a local orthonormal $\varphi$-adapted frame $(X_i,\varphi X_i,\xi_\alpha)$, $1\leqslant i\leqslant n$, $1\leqslant\alpha\leqslant s$, and since $\varphi$ acts as an Hermitian structure on $\mathrm{Im}(\varphi)$, we easily see that exactly one of the characteristic vector fields has to be timelike. Thus, it is natural to study the null Osserman condition of the manifold with respect to this timelike vector. There is no loss of generality in assuming $\xi_1$ is the unit timelike vector field, and from now on, unless otherwise stated, we will always suppose that the timelike vector field of a Lorentzian $g.f.f$-manifold $(M,\varphi,\xi_\alpha,\eta^\alpha,g)$ is $\xi_1$. It is known that Lorentz Sasakian manifolds with constant $\varphi$-sectional curvature are null Osserman with respect to the characteristic vector field. We are going to see that this result is no more true when we pass to Lorentzian $\mathcal{S}$-manifolds with more than one characteristic vector field.

\begin{proposition}
Let $(M,\varphi,\xi_\alpha,\eta^\alpha,g)$ be a Lorentzian $\mathcal{S}$-manifold, with $\dim(M)=2n+s$, $s\geqslant2$. Then for any $p\in M$, the null Osserman condition with respect to $(\xi_1)_p$ does not hold.
\end{proposition}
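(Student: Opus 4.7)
The plan is to contradict the null Osserman condition by exhibiting two vectors $u_1,u_2\in N((\xi_1)_p)$ whose Jacobi operators $\bar R_{u_1},\bar R_{u_2}$ have manifestly different eigenvalues. Since $s\geqslant 2$, I would set $u_1=(\xi_1)_p+(\xi_2)_p$; since $\dim\mathrm{Im}(\varphi_p)=2n\geqslant 2$, I would fix a unit spacelike $X\in\mathrm{Im}(\varphi_p)$ and set $u_2=(\xi_1)_p+X$. Using $\varepsilon_1=-1$, $\varepsilon_2=1$ and $g_p(X,X)=1$, one checks $g_p(u_i,u_i)=0$ and $g_p(u_i,\xi_1)=-1$, so $u_1,u_2\in N((\xi_1)_p)$.

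The first technical step is to establish the curvature identity
\[
R(x,\xi_\alpha)\xi_\beta=-\varepsilon_\alpha\varepsilon_\beta\,\varphi^2 x,\qquad x\in T_pM,\ \alpha,\beta\in\{1,\ldots,s\},
\]
which I would obtain from Lemma~\ref{lem:99}(2) by rewriting $g_p(R(x,\xi_\alpha)\xi_\beta,y)=R(y,\xi_\beta,x,\xi_\alpha)$ via the pair and block-swap symmetries as $R(\xi_\alpha,x,\xi_\beta,y)=\varepsilon_\alpha\varepsilon_\beta\,g(\varphi x,\varphi y)$ and then using the consequence $g(\varphi x,\varphi y)=g(-\varphi^2 x,y)$ of \eqref{eq:03}. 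Applied to $u_1$, multilinearity yields
\[
R(x,u_1)u_1=-\Bigl(\sum_{\alpha,\beta=1,2}\varepsilon_\alpha\varepsilon_\beta\Bigr)\varphi^2 x=-(\varepsilon_1+\varepsilon_2)^2\,\varphi^2 x=0
\]
for every $x\in T_pM$, so $\bar R_{u_1}=0$ and every eigenvalue of $\bar R_{u_1}$ is zero.

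For $u_2$ it suffices to exhibit a single strictly positive eigenvalue of $\bar R_{u_2}$. The vector $(\xi_2)_p$ is orthogonal to $u_2$, not proportional to it, and has $g_p(\xi_2,\xi_2)=1$, so it represents a unit vector $\overline{\xi_2}\in\bar u_2^{\bot}$. I would then expand $R(\xi_2,u_2)u_2$ bilinearly: the three terms involving $\xi_1$ in an inner slot give $0$ after pairing with $\xi_2$, because $\varphi\xi_1=0$ makes the relevant instances of Lemma~\ref{lem:99}(2) vanish. The only surviving contribution is
\[
g_p\bigl(R(\xi_2,u_2)u_2,\xi_2\bigr)=R(\xi_2,X,\xi_2,X)=\varepsilon_2^{\,2}\,g(\varphi X,\varphi X)=1,
\]
using $X\in\mathrm{Im}(\varphi)$ with $g(X,X)=1$. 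Since $\bar R_{u_2}$ is self-adjoint for the positive-definite $\bar g$, its largest eigenvalue is at least $\bar g(\bar R_{u_2}\overline{\xi_2},\overline{\xi_2})=1>0$. Comparing with the previous step, the spectrum of $\bar R_u$ is not constant on $N((\xi_1)_p)$, so the null Osserman condition at $p$ with respect to $(\xi_1)_p$ fails.

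The only subtle point will be the identity $R(x,\xi_\alpha)\xi_\beta=-\varepsilon_\alpha\varepsilon_\beta\varphi^2 x$, which is a careful unwinding of the algebraic symmetries of $R$ from Lemma~\ref{lem:99}(2). Once it is in hand, the two conclusions reduce to the arithmetic facts $(\varepsilon_1+\varepsilon_2)^2=0$, forced by the Lorentzian signature, and $g(\varphi X,\varphi X)=1$ for $X$ unit spacelike in $\mathrm{Im}(\varphi)$, which are what drive the eigenvalue contrast between $\bar R_{u_1}$ and $\bar R_{u_2}$.
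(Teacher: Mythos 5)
Your proposal is correct and follows essentially the same strategy as the paper: it contrasts the identically vanishing Jacobi operator of $u=\xi_1+\xi_2$ with the non-vanishing one of $v=\xi_1+X$, $X\in S_\varphi((\xi_1)_p)$, using the curvature identities of Lemma \ref{lem:99}. The only (cosmetic) differences are that you package the first computation into the operator identity $R(x,\xi_\alpha)\xi_\beta=-\varepsilon_\alpha\varepsilon_\beta\varphi^2x$, so that $\bar R_{u_1}=0$ drops out of $(\varepsilon_1+\varepsilon_2)^2=0$, and you detect a nonzero eigenvalue of $\bar R_{u_2}$ via the Rayleigh quotient at $\overline{\xi_2}$ rather than by computing $\bar R_v(\bar\xi_\beta)$ explicitly.
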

\proof Fix $p\in M$. For brevity we drop the subscript $p$ from the notations. Let $u=\xi_1+\xi_\beta=\psi^{-1}(\xi_\beta)\in N(\xi_1)$, for some $\beta\in\{2,\ldots,s\}$, and $\bar R_u$ be the Jacobi operator. It is easy to see that
\[
W'=\mathrm{Im}(\varphi)\oplus\mathrm{span}(\xi_2,\ldots,\hat\xi_\beta,\ldots,\xi_s),
\]
is a geometrical realization of $\bar u^\bot$, where $\hat\xi_\beta$ means that the vector $\xi_\beta$ is omitted. Identifying $\bar u^\bot\cong W'$, and putting $\bar w=\pi(w)$, for any $w\in W'$, by the definition of $\bar R_u$ and using (\ref{eq:04}), if $y,z\in\mathrm{Im}(\varphi)$, we get
\[
\begin{split}
\bar g(\bar R_u(\bar y),\bar z)&=\bar g(\pi(R(y,u)u),\pi(z))=g(R(y,u)u,z)\\
	                               &=R(\xi_1,y,\xi_1,z)+R(\xi_\beta,y,\xi_\beta,z)\\
	                               &\quad+R(\xi_1,y,\xi_\beta,z)+R(\xi_\beta,y,\xi_1,z)=0;
\end{split}
\]
if $y\in\mathrm{Im}(\varphi)$ and $\gamma\in\{2,\ldots,s\}-\{\beta\}$:
\[
\begin{split}
\bar g(\bar R_u(\bar y),\bar \xi_\gamma)&=\bar g(\pi(R(y,u)u),\pi(\xi_\gamma))=g(R(y,u)u,\xi_\gamma)\\
                                        &=R(\xi_1,y,\xi_1,\xi_\gamma)+R(\xi_\beta,y,\xi_\beta,\xi_\gamma)\\
	                                      &\quad+R(\xi_1,y,\xi_\beta,\xi_\gamma)+R(\xi_\beta,y,\xi_1,\xi_\gamma)=0;
\end{split}
\]
if $\alpha,\gamma\in\{2,\ldots,s\}-\{\beta\}$:
\[
\begin{split}
\bar g(\bar R_u(\bar \xi_\alpha),\bar \xi_\gamma)&=
\bar g(\pi(R(\xi_\alpha,u)u),\pi(\xi_\gamma))=g(R(\xi_\alpha,u)u,\xi_\gamma)\\
                &=R(\xi_1,\xi_\alpha,\xi_1,\xi_\gamma)+R(\xi_\beta,\xi_\alpha,\xi_\beta,\xi_\gamma)\\
	              &\quad+R(\xi_1,\xi_\alpha,\xi_\beta,\xi_\gamma)+R(\xi_\beta,\xi_\alpha,\xi_1,\xi_\gamma)=0.
\end{split}
\]
It follows that $\bar R_u=0$. Equivalently, the only eigenvalue of $\bar R_u$ is $\lambda=0$, and assuming $M$ is null Osserman with respect to $\xi_1$, then $\bar R_u=0$ for all $u\in N(\xi_1)$. Let us choose now $v=\xi_1+x$, with $x\in\mathrm{Im}(\varphi)$, $g(x,x)=1$. Then $x\in S(\xi_1)$, and $v=\psi^{-1}(x)\in N(\xi_1)$, thus we can consider the Jacobi operator $\bar R_v$. If $V=x^\bot\cap\mathrm{Im}(\varphi)$, then
\[
W''=V\oplus\mathrm{span}(\xi_2,\ldots,\xi_s)
\]
is a geometrical realization of $\bar v^\bot$. Identifying $\bar v^\bot\cong W''$ and using again (\ref{eq:04}), for any $\alpha,\beta\in\{2,\ldots,s\}$, we obtain
\begin{equation}\label{eq:05}
\bar g(\bar R_v(\bar \xi_\alpha),\bar \xi_\beta)=R(v,\xi_\alpha,v,\xi_\beta)=R(x,\xi_\alpha,x,\xi_\beta)=1;
\end{equation}
if $y\in V$ and $\beta\in\{2,\ldots,s\}$, using also the Bianchi Identity, we have
\begin{equation}\label{eq:06}
\begin{split}
\bar g(\bar R_v(\bar y),\bar \xi_\beta)&=R(v,y,v,\xi_\beta)\\
                                       &=R(\xi_1,y,x,\xi_\beta)+R(x,y,\xi_1,\xi_\beta)+R(x,y,x,\xi_\beta)\\
                                       &=R(x,\xi_1,y,\xi_\beta)-R(x,\xi_\beta,y,\xi_1)=0.
\end{split}
\end{equation}
But (\ref{eq:05}) and (\ref{eq:06}) imply that $\bar R_v(\bar\xi_\beta)=\sum_{\alpha=2}^s\bar\xi_\alpha\neq0$, which contradicts the assumption that $M$ is null Osserman, and the claim follows. \endproof

From the above proof it is clear that the Jacobi operators $\bar R_u$, with $u=\xi_1+\xi_\beta$, have a trivial behaviour with respect to the eigenvalues, since they vanish identically. Therefore, in \cite{BR} the following new condition is introduced.

Let $(M,\varphi,\xi_\alpha,\eta^\alpha,g)$ be a Lorentz $g.f.f$-manifold, with $\dim(M)=2n+s$, $s\geqslant1$. If $p\in M$, the \emph{$\varphi$-celestial sphere} of $(\xi_1)_p$ is, by definition, the set
\[
S_\varphi((\xi_1)_p)=S((\xi_1)_p)\cap\mathrm{Im}(\varphi_p),
\]
while
\[
N_\varphi((\xi_1)_p)=\psi^{-1}(S_\varphi((\xi_1)_p)),
\]
is called the \emph{$\varphi$-null congruence set} of $(\xi_1)_p$.

\begin{definition}
Let $(M,\varphi,\xi_\alpha,\eta^\alpha,g)$ be a Lorentzian $g.f.f$-manifold, and $p\in M$. We say that $M$ is \emph{$\varphi$-null Osserman with respect to $(\xi_1)_p$} if the eigenvalues of $\bar R_u$ and their multiplicities are independent of $u\in N_\varphi((\xi_1)_p)$.
\end{definition}

One easily sees that when $M$ is a Lorentzian almost contact metric manifold $(M,\varphi,\xi,\eta,g)$, the $\varphi$-null Osserman condition reduces to the null Osserman condition, since $N_\varphi(\xi_p)=N(\xi_p)$ and $S_\varphi(\xi_p)=S(\xi_p)=\mathrm{Im}(\varphi_p)$. Moreover, in \cite{BR} it is proved that any Lorentzian $\mathcal{S}$-manifold with constant $\varphi$-sectional curvature is $\varphi$-null Osserman with respect to $(\xi_1)_p$, at any point, generalizing the similar known result for Lorentz Sasakian space forms.

Let $(M,\varphi,\xi_\alpha,\eta^\alpha,g)$ be a Lorentz $g.f.f$-manifold, with $\dim(M)=2n+s$, $s\geqslant1$. Fix $p\in M$ and consider $u\in N_\varphi((\xi_1)_p)$. Writing $u=(\xi_1)_p+x$, with $x\in S_\varphi((\xi_1)_p)$, we can consider the Jacobi operator $R_x\colon x^\bot\rightarrow x^\bot$ corresponding to $\bar R_u\colon\bar u^\bot\rightarrow\bar u^\bot$, and vice-versa. We are going to find out the link between these two operators.

\begin{proposition}\label{prop:01}
Let $(M,\varphi,\xi_\alpha,\eta^\alpha,g)$ be a Lorentz $\mathcal{S}$-manifold, with $\dim(M)=2n+s$, $s\geqslant1$, and $p\in M$. Then, $M$ is $\varphi$-null Osserman with respect to $(\xi_1)_p$ if, and only if, the eigenvalues of $R_x$ with their multiplicities are independent of $x\in S_\varphi((\xi_1)_p)$.
\end{proposition}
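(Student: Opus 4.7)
The map $x\mapsto u=\xi_1+x$ gives a bijection $S_\varphi((\xi_1)_p)\to N_\varphi((\xi_1)_p)$, so the assertion reduces to showing that, for a suitable geometric realization of $\bar u^\bot$, the characteristic polynomials of $\bar R_u$ and of $R_x$ depend on $x$ only through a common $x$-dependent factor, differing otherwise only by $x$-independent data. I will do this by exhibiting a convenient common block structure.

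As geometric realization of $\bar u^\bot$ I take $W=V\oplus\langle\xi_2,\ldots,\xi_s\rangle$, where $V=x^\bot\cap\mathrm{Im}(\varphi)$. The orthogonality relations coming from (\ref{eq:03}) show that $W\subset u^\bot$; a dimension count gives complementarity with $\mathrm{span}(u)$; and moreover $x^\bot=W\oplus\langle\xi_1\rangle$, so $W$ sits simultaneously inside $u^\bot$ and inside $x^\bot$.

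The main step is to expand $R(v,u)u=R(v,\xi_1+x)(\xi_1+x)$ bilinearly and compute each summand via Lemma~\ref{lem:99} and (\ref{eq:04}). For $y\in\mathrm{Im}(\varphi)$ one finds $R(y,\xi_1)\xi_1=y$, $R(y,\xi_1)x=g(y,x)\tilde\xi$ and $R(y,x)\xi_1=0$; restricting to $y\in V$ these give $R(y,u)u=y+R_x(y)$, which lies in $V\subset W$ because Lemma~\ref{lem:99}(4) forces $R_x$ to preserve $V$. Hence the operator induced by $\bar R_u$ on $W$ acts on $V$ as $\mathrm{Id}_V+R_x|_V$. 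For $\alpha\in\{2,\ldots,s\}$ a parallel calculation yields $R(\xi_\alpha,\xi_1)\xi_1=R(\xi_\alpha,\xi_1)x=0$, $R(\xi_\alpha,x)\xi_1=\varepsilon_\alpha x$ and $R(\xi_\alpha,x)x=\varepsilon_\alpha\tilde\xi$, so that $R(\xi_\alpha,u)u=x+\tilde\xi=u+\sum_{\beta=2}^s\xi_\beta$, which projects onto $W$ modulo $u$ as $\sum_{\beta=2}^s\xi_\beta$. Altogether, in the splitting $W=V\oplus\langle\xi_2,\ldots,\xi_s\rangle$ the pullback of $\bar R_u$ is block-diagonal, with blocks $\mathrm{Id}_V+R_x|_V$ and the $(s-1)\times(s-1)$ all-ones matrix $J$.

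An entirely analogous computation on $x^\bot=V\oplus\langle\xi_1,\ldots,\xi_s\rangle$ shows that $R_x$ is block-diagonal, with $V$-block $R_x|_V$ and with $\xi$-block the fixed operator $K\colon\xi_\alpha\mapsto\varepsilon_\alpha\tilde\xi$. Since neither $J$ nor $K$ depends on $x$, the characteristic polynomials of $\bar R_u$ and of $R_x$ depend on $x$ only through the characteristic polynomial of $R_x|_V$. Constancy of the eigenvalues and multiplicities of $\bar R_u$ on $N_\varphi((\xi_1)_p)$ is therefore equivalent to constancy of the spectrum of $R_x|_V$, and hence to constancy of the eigenvalues of $R_x$ on $S_\varphi((\xi_1)_p)$, which proves the proposition. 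The main obstacle is the careful bookkeeping required to verify that all off-diagonal blocks vanish: this rests crucially on item (4) of Lemma~\ref{lem:99}, the first Bianchi identity and the basic antisymmetries of $R$, which together kill the mixed terms such as $R(\xi_\alpha,\xi_1)\xi_1$, $R(y,x)\xi_1$, and the $\xi$-components of $R_x(y)$ and of $R(y,u)u$.
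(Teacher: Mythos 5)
Your argument is correct and follows essentially the same route as the paper's proof: the same bijection $x\mapsto\xi_1+x$, the same geometrical realization $W=V\oplus\langle\xi_2,\ldots,\xi_s\rangle$ with $V=x^\bot\cap\mathrm{Im}(\varphi_p)$, the same block decompositions with $V$-blocks related by $\bar R_u|_V=\mathrm{Id}_V+R_x|_V$ and $x$-independent $\xi$-blocks, and the same conclusion via characteristic polynomials. The only difference is presentational (you compute the operators directly, the paper records the inner products $\bar g(\bar R_u(\cdot),\cdot)$ and the resulting matrices), so no further comment is needed.
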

\proof Throughout the proof we fix $p\in M$ and, for simplicity, we omit it from the notations. Let us first suppose $s\geqslant 2$. Choose $u\in N_\varphi(\xi_1)$ and put $u=\xi_1+x$, with $x\in S_\varphi(\xi_1)$. If $V=x^\bot\cap\mathrm{Im}(\varphi)$, then $W_1=V\oplus\mathrm{span}(\xi_2,\ldots,\xi_s)$ is a geometrical realization of $\bar u^\bot$ and $W_2=V\oplus\mathrm{span}(\xi_1,\xi_2,\ldots,\xi_s)=x^\bot$. Identifying $\bar u^\bot\cong W_1$, and putting $\bar w=\pi(w)$, $w\in W_1$, from (\ref{eq:05}) and (\ref{eq:06}), for any $y\in V$ and any $\alpha,\beta\in\{2,\ldots,s\}$, we get
\begin{align}
&\bar g(\bar R_u(\bar \xi_\alpha),\bar \xi_\beta)=g(R_x(\xi_\alpha),\xi_\beta)=1, \label{eq:07}\\
&\bar g(\bar R_u(\bar y),\bar \xi_\beta)=g(R_x(y),\xi_\beta)=0. \label{eq:08}
\end{align}
Using (\ref{eq:04}), for any $y,z\in V$, one gets
\begin{equation}\label{eq:14}
\begin{split}
\bar g(\bar R_u(\bar y),\bar z)&=R(u,y,u,z)\\
                               &=R(\xi_1,y,\xi_1,z)+R(x,y,x,z)+R(\xi_1,y,x,z)+R(x,y,\xi_1,z)\\
                               &=g(y,z)+g(R_x(y),z).
\end{split}
\end{equation}
Finally, for any $y\in V$ and any $\alpha\in\{1,\ldots,s\}$:
\begin{equation}\label{eq:09}
g(R_x(y),\xi_1)=0, \qquad g(R_x(\xi_1),\xi_\alpha)=\left\{\begin{array}{cc}1&\quad \alpha=1\\ -1 & \quad \alpha\neq 1\end{array}\right.
\end{equation}
It follows that $V$ is an invariant subspace with respect to the action of both $\bar R_u$ and $R_x$. Choosing any orthonormal base $\mathcal{B}$ for $V$, then $\mathcal{B}_1=\mathcal{B}\cup\{\xi_2,\ldots,\xi_s\}$ and $\mathcal{B}_2=\mathcal{B}\cup\{\xi_1,\xi_2,\ldots,\xi_s\}$ are orthonormal basis for $W_1$ and $x^\bot$, respectively. If we denote by $A$ the $(2n-1)$-square matrix of $\bar R_u|_V$ with respect to the base $\mathcal{B}$, then (\ref{eq:07}), (\ref{eq:08}), (\ref{eq:14}) and (\ref{eq:09}) imply that
\[
C=\left(\begin{array}{cc}
A&0\\
0&B
\end{array}\right)
\]
is the matrix of $\bar R_u$ with respect to the bases $\bar{\mathcal{B}}_1=\pi(\mathcal{B}_1)$ of $\bar u^\bot$, and
\[
D=\left(
\begin{array}{ccc}
A-I_{2n-1}&\begin{array}{c}0\\\vdots\\0\end{array}&0\\
0\ldots0&-1&1\ldots1\\
0&\begin{array}{c}-1\\\vdots\\-1\end{array}&B
\end{array}
\right)
\]
is the matrix of $R_x$ with respect to $\mathcal{B}_2$, where, in both matrices, $B$ is the $(s-1)$-square matrix with all elements equal to $1$. Computing the characteristic polynomials of $C$ and $D$, we get
\[
\begin{split}
&p_C(\lambda)=p_A(\lambda)(-1)^{s-1}\lambda^{s-2}(\lambda-s+1),\\ &p_D(\lambda)=p_A(\lambda+1)(-1)^s\lambda^{s-1}(\lambda-(s-2)),
\end{split}
\]
from which the statement follows.

The same proof also works in the case $s=1$, with only straightforward modifications. Namely, for $s=1$, the matrix $B$ disappears, hence $C=A$ and
\[
D=\left(\begin{array}{cc}
	A-I_{2n-1}&0\\
	0&-1
\end{array}\right),
\]
thus obtaining $p_C(\lambda)=p_A(\lambda)$ and $p_D(\lambda)=-(\lambda+1)p_A(\lambda+1)$, from which the statement again follows. \endproof

\begin{remark}
Let $(M,\varphi,\xi_\alpha,\eta^\alpha,g)$ be a Lorentzian $\mathcal{S}$-manifold. Since it is clear that $S_\varphi((\xi_1)_p)=S_\varphi(-(\xi_1)_p)$, $p\in M$, from the above proposition we get that $M$ is $\varphi$-null Osserman with respect to $(\xi_1)_p$ if and only if so it is with respect to $-(\xi_1)_p$. Therefore, from now on, any Lorentzian $\mathcal{S}$-manifold satisfying the $\varphi$-null Osserman condition with respect to $(\xi_1)_p$ will be simply said to be \emph{$\varphi$-null Osserman at the point $p$}.
\end{remark}

Let $(M,\varphi,\xi_\alpha,\eta^\alpha,g)$ be a Lorentz $\mathcal{S}$-manifold, with $\dim(M)=2n+s$, $s\geqslant1$. Fix $p\in M$, consider $u\in N_\varphi((\xi_1)_p)$ and put $\psi(u)=u-(\xi_1)_p=x\in S_\varphi((\xi_1)_p)$.

As before, if $s\geqslant2$, putting $V=x^\bot\cap\mathrm{Im}(\varphi_p)$ and $U=\mathrm{span}((\xi_2)_p,\ldots,(\xi_s)_p)$, we can consider the geometrical realization $W=V\oplus U$ of $\bar u^\bot$, and identify $\bar u^\bot\cong W$. From the proof of Proposition \ref{prop:01}, it is clear that we can decompose $\bar R_u=\bar R_u|_V\circ p_V+\bar R_u|_U\circ p_U$, where $p_V$ and $p_U$ are the projections of $W$ onto $V$ and $U$, respectively. Clearly, $\bar R_u|_U\circ p_U$ always admits the eigenvalues $\lambda_0=0$ and $\lambda_1=s-1$, with multiplicity $m_0=s-2$ and $m_1=1$, independent of $u\in N_\varphi((\xi_1)_p)$. If $s=1$, the subspace $U$ simply disappears.

In any case, we can fix our attention only on the behaviour of the endomorphism $\bar R_u|_V\circ p_V$, which we denote, from now on, by $\bar R_u^\varphi$.

\begin{proposition}\label{prop:04}
Let $(M,\varphi,\xi_\alpha,\eta^\alpha,g)$ be a Lorentzian $\mathcal{S}$-manifold, with $\dim(M)=(4m+2)+s$, $s\geqslant1$, where $\dim(\mathrm{Im}(\varphi))=4m+2$. If $M$ is $\varphi$-null Osserman at $p\in M$, then for all $u\in N_{\varphi}((\xi_1)_p)$ only one of the following two cases can occur:
\begin{enumerate}
	\item [(i)] $\bar R_u^\varphi$ admits exactly one eigenvalue $c$ with multiplicity $4m+1$;
	\item [(ii)] $\bar R_u^\varphi$ admits exactly two eigenvalues $c_1$ and $c_2$ with multiplicities $1$ and $4m$.
\end{enumerate}
\end{proposition}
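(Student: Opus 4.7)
My plan is to use Proposition~\ref{prop:01} to translate the $\varphi$-null Osserman hypothesis into the constancy of the spectrum of $R_x|_V$ as $x$ ranges over $S_\varphi((\xi_1)_p)$, where $V=x^\bot\cap\mathrm{Im}(\varphi_p)$ has dimension $4m+1$. The computation leading to~\eqref{eq:14} shows that $\bar R_u|_V = I_V + R_x|_V$ on $V$, so the eigenvalues of $\bar R_u^\varphi$ are exactly those of $R_x|_V$ shifted by $1$, with the same multiplicities. Hence it suffices to establish the dichotomy directly for $R_x|_V$: either one eigenvalue of multiplicity $4m+1$, or two eigenvalues of multiplicities $1$ and $4m$.

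The first structural step is to exhibit a canonical one-dimensional invariant direction. Since $g(\varphi x,x)=-g(x,\varphi x)=0$, the vector $\varphi x$ lies in $V$. I would then prove that $\varphi x$ is always an eigenvector of $R_x|_V$: for any $y\in V$ with $y\perp\varphi x$, the scalar $g(R_x(\varphi x),y)=R(y,x,\varphi x,x)$ is to be shown to vanish by combining the first Bianchi identity, the standard symmetries of $R$, the curvature identities of Lemma~\ref{lem:99}, and the compatibility relation~\eqref{eq:03}. Writing $c_1=c_1(x)$ for the corresponding eigenvalue, the $\varphi$-null Osserman hypothesis (via Proposition~\ref{prop:01}) forces $c_1$ to be independent of~$x$.

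Next I would set $V'=V\cap(\varphi x)^\bot$, of dimension $4m$. Using~\eqref{eq:03} one checks that $\varphi$ preserves $V'$, so $V'$ inherits a Hermitian complex structure of complex dimension $2m$. Self-adjointness of $R_x|_V$ together with the $R_x$-invariance of $\mathrm{span}(\varphi x)$ then yields $R_x(V')\subseteq V'$, so the orthogonal splitting $V=\mathrm{span}(\varphi x)\oplus V'$ is $R_x$-invariant, and the spectrum of $R_x|_V$ is $c_1$ together with the spectrum of $R_x|_{V'}$.

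The final and hardest part is to prove that $R_x|_{V'}$ admits only one eigenvalue $c_2$. As $x$ sweeps $S_\varphi((\xi_1)_p)\cong S^{4m+1}\subset\mathrm{Im}(\varphi_p)$, the spaces $V(x)$ assemble into $TS^{4m+1}$, and the subspaces $V'(x)$ into the $g$-orthogonal complement of the Hopf-type line field $x\mapsto\varphi x$. Spectral constancy of $R_x|_{V'}$ would make each eigenspace a smooth $\varphi$-invariant subbundle of $V'\to S^{4m+1}$, so more than one eigenvalue would produce a proper smooth direct-sum decomposition. The main obstacle is to rule this out, which I expect to follow from a rigidity/obstruction argument in the spirit of Chi~\cite{C0}, exploiting both the $\varphi$-invariance of the candidate eigenbundles and the specific parity $\dim V'=4m$ dictated by the assumption $\dim(\mathrm{Im}(\varphi))=4m+2$; once this is established, the dichotomy (i)--(ii) follows at once.
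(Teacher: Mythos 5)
Your reduction to $R_x|_V$ via Proposition~\ref{prop:01} and \eqref{eq:14} is correct (and harmless, though the paper works directly with $\bar R_u^\varphi$), but the two substantive steps of your plan do not hold up. First, the claim that $\varphi x$ is always an eigenvector of $R_x|_V$ is unjustified and is false in general: Lemma~\ref{lem:99} only constrains curvature components having at least one argument in $\langle\xi_1,\ldots,\xi_s\rangle$, and neither the Bianchi identity nor \eqref{eq:03} forces $R(y,x,\varphi x,x)=0$ for $y\in V\cap(\varphi x)^\bot$; the restriction of $R$ to $\mathrm{Im}(\varphi_p)^4$ is essentially unconstrained by the $\mathcal{S}$-structure. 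Indeed, in case (ii) the multiplicity-one eigendirection is used in Section~4 to build an almost Hermitian structure $J$ on $\mathrm{Im}(\varphi_p)$ that is in general \emph{distinct} from $\varphi$; if your claim were true, $J$ would always coincide with $\pm\varphi$ and the tensor $R^J$ would be redundant. Second, and more seriously, the ``final and hardest part'' — that $R_x|_{V'}$ has a single eigenvalue — is precisely the content of the proposition, and you defer it to an unspecified ``rigidity/obstruction argument in the spirit of Chi'' without supplying it. As stated, the proposal proves nothing beyond the set-up.

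The missing ingredient is a concrete topological fact, and once you invoke it the $\varphi x$ step becomes unnecessary. Because the eigenvalues and multiplicities of $\bar R_u^\varphi$ are independent of $u$, each eigenvalue $c$ determines a constant-rank eigendistribution $\mathcal{D}_x=\ker(\mathcal{R}_x-cI)$ of the induced bundle endomorphism $\mathcal{R}$ of $T\Sf^{4m+1}$, where $S_\varphi((\xi_1)_p)\cong\Sf^{4m+1}$ and $V(x)\cong T_x\Sf^{4m+1}$. By the classical result on tangent plane fields on spheres (\cite{S}, p.~155), $\Sf^{4m+1}$ admits a $k$-dimensional distribution only for $k\in\{1,4m,4m+1\}$; since a direct sum of eigendistributions for distinct eigenvalues is again a distribution, the multiplicities, which partition $4m+1$, can only be $\{4m+1\}$ or $\{1,4m\}$. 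This is the whole proof; no invariant splitting $V=\mathrm{span}(\varphi x)\oplus V'$ is needed, and no parity argument on $V'$ beyond the dimension count enters.
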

\proof Fix $p\in M$. Identifying $\mathrm{Im}(\varphi_p)\cong\R^{4m+2}$, we consider $S_{\varphi}((\xi_1)_p)\cong\Sf^{4m+1}$ and we endow $N_{\varphi}((\xi)_p)$ with the smooth structure such that $\psi\colon N_{\varphi}((\xi)_p)\rightarrow \Sf^{4m+1}$ is a diffeomorphism. Hence, for any $u\in N_\varphi((\xi_1)_p)$, putting $x=\psi(u)$, we can identify $V=x^\bot\cap\mathrm{Im}(\varphi_p)\cong T_x\Sf^{4m+1}$ and, under this identification, $\bar R_u^\varphi$ induces a unique endomorphism $\mathcal{R}_x\colon T_x\Sf^{4m+1}\rightarrow T_x\Sf^{4m+1}$ defined by $\mathcal{R}_x(y)=\pi^{-1}(\bar R_u^\varphi(\bar y))$, for all $y\in V$. It is clear that $\mathcal{R}_x$ and $\bar R_u^\varphi$ admit the same eigenvalues, and as we let $u$ vary in $N_\varphi((\xi_1)_p)$ we obtain a bundle homomorphism $\mathcal{R}=(\mathcal{R}_x)_{x\in\Sf^{4m+1}}\colon T\Sf^{4m+1}\rightarrow T\Sf^{4m+1}$. Let $c\in\R$ be an eigenvalue of $\bar R_u^\varphi$, independent of $u\in N_{\varphi}((\xi)_p)$. Putting $\mathcal{D}_x=\ker(\mathcal{R}_x-cI)$, we get a distribution $\mathcal{D}=(\mathcal{D}_x)_{x\in\Sf^{4m+1}}$ on $\Sf^{4m+1}$. By a well-known result about the maximal dimensions of distributions on spheres (see \cite{S}, pag.~155), the only possibilities for $\dim(\mathcal{D})$ are $1$, $4m$ or $4m+1$. If $\dim(\mathcal{D})=4m+1$, then $c$ is the only eigenvalue of each $\bar R_u^\varphi$, with multiplicity $4m+1$, and $(i)$ follows. If $\dim(\mathcal{D})=1$, for another eigenvalue $c'$ with distribution $\mathcal{D}'=(\mathcal{D}'_x)_{x\in\Sf^{4m+1}}$, $\mathcal{D}'_x=\ker(\mathcal{R}_x-c'I)$, we must have  $\dim(\mathcal{D}')=4m$, since otherwise we get a $2$-dimensional distribution $\mathcal{D}\oplus\mathcal{D}'$ on $\Sf^{4m+1}$, and $(ii)$ follows. \endproof

Note that in case $(i)$, $M$ has constant $\varphi$-sectional curvature at the point $p$, and if we suppose that $M$ is $\varphi$-null Osserman at each $p\in M$, and that the eigenvalues of $\bar R_u$ do not depend on the point $p$, then $M$ is a Lorentz $\mathcal{S}$-space form.

\section{The curvature of some $\varphi$-null Osserman Lorentzian $\mathcal{S}$-manifolds.}
In this section we study in full generality the curvature tensor of those $\varphi$-null Osserman Lorentzian $\mathcal{S}$-manifolds whose Jacobi operators $\bar R_u^\varphi$ admit exactly two distinct eigenvalues, thus including the case $(ii)$ of Proposition \ref{prop:04}. In \cite{BR} it has been proved a curvature result for the special subclass made of the $\varphi$-null Osserman Lorentzian $\mathcal{S}$-manifolds with two characteristic vector fields. Here, we provide an extension of that result to the general case of $\varphi$-null Osserman Lorentzian $\mathcal{S}$-manifolds with an arbitrary number of characteristic vector fields.

The key point of the proof is the special algebraic characterization for curvature-like maps provided by Lemma \ref{Lemma:01}, which generalizes the analogue result of \cite{BR}. We begin by finding, in the following remark, useful expressions for lightlike vectors which take advantage of the presence of the Lorentzian $\mathcal{S}$-structure.

Throughout what follows, we fix a point $p$ in the manifold and, for simplicity, we omit it from the notations.

\begin{remark}
Let $(M,\varphi,\xi_\alpha,\eta^\alpha,g)$ be a Lorentzian $\mathcal{S}$-manifold, $\dim(M)=2n+s$, $s\geqslant2$, and $u$ a lightlike tangent vector at $p$. Since $T_pM=\mathrm{Im}(\varphi)\oplus\mathrm{span}(\xi_1,\ldots\xi_s)$ and $\xi_1$ is unit timelike, we can always express the vector $u$, up to a non-vanishing factor, either in the form
\begin{equation}\label{1}
u=tx+\xi_1+\sum_{\alpha=2}^sk_\alpha\xi_\alpha,
\end{equation}
where $x\in\mathrm{Im}(\varphi)$ with $g(x,x)=1$, and $t,k_2,\ldots,k_s\in\mathbb{R}$ such that $t\neq0$ and  $t^2+\sum_{\alpha=2}^s(k_\alpha)^2=1$, or in the form
\begin{equation}\label{2}
	u=\xi_1+\sum_{\alpha=2}^sh_\alpha\xi_\alpha,
\end{equation}
where $h_2,\ldots,h_s\in\mathbb{R}$ with $\sum_{\alpha=2}^s(h_\alpha)^2=1$. 

A lightlike vector $u\in T_pM$ will be called \emph{of the first (resp.\ second) kind}, if it has the form \eqref{1} (resp.\ \eqref{2}). Any degenerate plane $\pi$ in $T_pM$ can be written in the form $\pi=\mathrm{span}(u,y)$, where $u\in T_pM$ is a lightlike vector either of the first or of the second kind, and $y\in u^\bot$. Let us see how to express $u^\bot$, according to the kind of $u$.

Suppose $u$ is of the first kind and put $w_0=\xi_1+\sum_{\alpha=2}^sk_\alpha\xi_\alpha$. Since $g(w_0,w_0)<0$, we can consider an orthonormal base $\mathcal{B}=(w_1,w_2,\ldots,w_{s-1})$ of the Euclidean space $E=w_0^\bot\cap\mathrm{span}(\xi_1,\ldots,\xi_s)$ and we have
\[
u^\bot=\mathrm{span}(u,\varphi x,x_2,\varphi x_2,\ldots,x_n,\varphi x_n,w_1,\ldots,w_{s-1}),
\]
where $(x,\varphi x,x_2,\varphi x_2,\ldots,x_n,\varphi x_n)$ is any orthonormal base of $\mathrm{Im}(\varphi)$. Hence, any $y\in u^\bot$ can be written as
\begin{equation}\label{y1}
y=au+by'+\lambda^iw_i,
\end{equation}
where $a,b,\lambda^1,\ldots,\lambda^{s-1}\in\mathbb{R}$, and $y'\in\mathrm{span}(\varphi x,x_2,\varphi x_2,\ldots,x_n,\varphi x_n)$, $g(y',y')=1$.

Suppose $u$ is of the second kind and put $\xi'=\sum_{\alpha=2}^sh_\alpha\xi_\alpha$. We can consider any orthonormal base $\mathcal{B}'=(w_1',\ldots,w_{s-2}')$ of $\xi'^\bot\cap\mathrm{span}(\xi_2,\ldots,\xi_s)$ and we get 
\[
u^\bot=\mathrm{span}(u)\oplus\mathrm{Im}(\varphi)\oplus\mathrm{span}(w_1',\ldots,w_{s-2}').
\]
Hence, any $y\in u^\bot$ can be written as
\begin{equation}\label{y2}
y=au+by'+\mu^iw_i',
\end{equation}
with $a,b,\mu^1,\ldots,\mu^{s-2}\in\R$ and $y'\in\mathrm{Im}(\varphi)$, $g(y',y')=1$.
\end{remark}

Let $(M,\varphi,\xi_\alpha,\eta^\alpha,g)$ be a Lorentzian $\mathcal{S}$-manifold, $\dim(M)=2n+s$, $s\geqslant1$, and let us consider the $(1,3)$-type algebraic curvature tensors $T$ and $S$ on $M$ defined by
\begin{align*}
	T(X,Y)Z&=g(\varphi Y,\varphi Z)\tilde\eta(X)\tilde\xi-g(\varphi X,\varphi Z)\tilde\eta(Y)\tilde\xi\\
	       &\quad-\tilde\eta(Y)\tilde\eta(Z)\varphi^2 X+\tilde\eta(X)\tilde\eta(Z)\varphi^2 Y,\\
	S(X,Y)Z&=g(\varphi X,\varphi Z)\varphi^2 Y-g(\varphi Y,\varphi Z)\varphi^2 X,
\end{align*}
for any $X,Y,Z\in\Gamma(TM)$.

Now, we are in a position to prove the following remarkable result which characterizes a special class of curvature-like maps on a Lorentzian $g.f.f$-manifold by the behaviour with respect to suitable degenerate planes, adapted to the study of the $\varphi$-null Osserman condition.

\begin{lemma}\label{Lemma:01}
Let $(M,\varphi,\xi_\alpha,\eta^\alpha,g)$ be a Lorentzian $g.f.f$-manifold, $\dim(M)=2n+s$, $s\geqslant1$. Fix $p\in M$, and let $F\colon(T_p M)^4\rightarrow \mathbb{R}$ be a curvature-like map satisfying
\begin{equation}\label{eq:18}
F(x,\xi_\alpha,y,\xi_\beta)=\varepsilon_\alpha\varepsilon_\beta g(\varphi x,\varphi y)\qquad\text{and}\qquad F(\varphi x,\varphi y, \varphi z,\xi_\alpha)=0,
\end{equation}
for any $x,y,z\in T_p M$ and $\alpha,\beta\in\{1,\ldots,s\}$. The following statements are equivalent.
\begin{itemize}
	\item [(a)] $F(u,y,u,y)=0$ on any degenerate $2$-plane $\pi=span\{u,y\}$ with $u\in N_\varphi((\xi_1)_p)$ and $y\in u^\bot\cap\mathrm{Im}(\varphi_p)$;
	\item [(b)] $F(x,y,z,w)=g_p(S_p(x,y)z,w)-g_p(T_p(x,y)z,w)$, for any $x,y,v,w\in T_p M$.
\end{itemize}
\end{lemma}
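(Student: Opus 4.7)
The plan is to introduce the curvature-like map
\[
G(x,y,z,w) = F(x,y,z,w) - g_p(S_p(x,y)z,w) + g_p(T_p(x,y)z,w),
\]
so that (b) becomes equivalent to $G \equiv 0$. A direct check using $\varphi\xi_\alpha = 0$, $\tilde\eta\circ\varphi = 0$, and $\varphi^2|_{\mathrm{Im}(\varphi_p)} = -\mathrm{id}$ shows that $g(S(\cdot,\cdot)\cdot,\cdot) - g(T(\cdot,\cdot)\cdot,\cdot)$ itself satisfies \eqref{eq:18}, so $G$ inherits both identities with value zero. Combining the second one with the bijectivity of $\varphi|_{\mathrm{Im}(\varphi_p)}$ yields $G(a,b,c,\xi_\alpha) = 0$ for every $a,b,c\in\mathrm{Im}(\varphi_p)$; together with the curvature-like symmetries and the first Bianchi identity this delivers further vanishings such as $G(x,\xi_\alpha,\xi_\beta,\xi_\gamma) = 0$, $G(\xi_\alpha,y,\xi_\beta,\xi_\gamma) = 0$ and $G(\xi_\alpha,y,x,y) = 0 = G(x,y,\xi_\alpha,y)$ for $x,y\in\mathrm{Im}(\varphi_p)$. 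Using these identities, (b)$\Rightarrow$(a) reduces to a short computation: expanding $S_p(u,y)u$ and $T_p(u,y)u$ for $u = (\xi_1)_p + x$ with $x\in S_\varphi((\xi_1)_p)$ and $y\in u^\bot\cap\mathrm{Im}(\varphi_p)$, one finds $g_p(S_p(u,y)u,y) = -g_p(y,y) = g_p(T_p(u,y)u,y)$, so $F(u,y,u,y) = 0$.

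For (a)$\Rightarrow$(b) the core step is to prove that $G(u,y,u,y) = 0$ on every degenerate $2$-plane $\pi = \mathrm{span}(u,y)$ of $T_pM$. By the Remark above, $u$ is (up to a non-zero factor) either of the \emph{second kind}, $u\in\mathrm{span}(\xi_1,\ldots,\xi_s)$, or of the \emph{first kind}, $u = tx + w_0$ with $w_0 = \xi_1 + \sum_{\alpha\geq 2}k_\alpha\xi_\alpha$, $x\in\mathrm{Im}(\varphi_p)$ unit, and $t\neq 0$. Writing $y = y^h + y^v$ with $y^h\in\mathrm{Im}(\varphi_p)$ and $y^v\in\mathrm{span}(\xi_\alpha)$, one expands $G(u,y,u,y)$ bilinearly. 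In the second-kind case every resulting cross-term carries a characteristic vector in each pair of argument slots and vanishes by the identities derived above. In the first-kind case, the substitution $y \mapsto y - cu$ with $c = g_p(y^h,x)/t$ leaves both $\pi$ and the value $G(u,y,u,y)$ unchanged while arranging $y^h \perp x$; a systematic expansion then makes every cross-term vanish, leaving $G(u,y,u,y) = t^2 G(x,y^h,x,y^h)$. Finally, applying hypothesis (a) to $u_0 = (\xi_1)_p + x$ and running the identical reduction with $t = 1$ and $w_0 = \xi_1$ gives $G(u_0, y^h, u_0, y^h) = G(x,y^h,x,y^h)$, forcing $G(x,y^h,x,y^h) = 0$, and hence $G(u,y,u,y) = 0$ in every case.

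Having shown $G$ vanishes on every degenerate plane, Lemma \ref{degenere} applied to $G$ yields
\[
G(x,y,z,w) = k\bigl(g_p(x,z)g_p(y,w) - g_p(y,z)g_p(x,w)\bigr)
\]
for some $k\in\R$; evaluating at a unit spacelike $x\in\mathrm{Im}(\varphi_p)$ with $\xi_1$ in the remaining slots and using $G(x,\xi_1,x,\xi_1) = 0$ forces $-k = 0$, whence $G\equiv 0$ and (b) holds. The main obstacle is the bookkeeping in the first-kind expansion: one must track, by means of the curvature-like symmetries, the first Bianchi identity and \eqref{eq:18}, the vanishing of each of the many cross-terms in the bilinear expansion of $G(u,y,u,y)$ in order to isolate the single summand $t^2 G(x,y^h,x,y^h)$ on which hypothesis (a) can finally be brought to bear.
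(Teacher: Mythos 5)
Your proposal is correct and follows essentially the same route as the paper: both introduce the difference tensor (the paper's $H$, your $G$), show that it vanishes on every degenerate plane by treating lightlike vectors of the first and second kind separately, and then conclude via Lemma \ref{degenere} together with \eqref{eq:18} that the resulting constant $k$ must be zero. The only cosmetic differences are that you carry the cancellations inside $G$ throughout rather than expanding $F$ and matching it against the explicit $S$- and $T$-terms as the paper does, and that the paper dispatches the case $s=1$ by a separate shortcut while your argument covers it uniformly (the second-kind case being vacuous there).
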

\begin{proof} Fix $p\in M$ and, for simplicity, let us omit it from the notations. We treat separately the case $s=1$ and $s\geqslant2$.

If $s=1$, $M$ reduces to a Lorentzian almost contact metric manifold $(M,\varphi,\xi,\eta,g)$, with $g(\xi,\xi)=-1$. Hence, $\tilde\eta=-\eta$, $\tilde\xi=\xi$ and, by \eqref{eq:01}, $g(\varphi X,\varphi Y)=g(X,Y)+\eta(X)\eta(Y)$, for any $X,Y\in\Gamma(TM)$. Using this, it is easy to see that, for any $x,y,z\in T_pM$
\begin{align*}
S(x,y)z&=g(y,z)x-g(y,z)\eta(x)\xi+\eta(y)\eta(z)x\\
       &\qquad-g(x,z)y+g(x,z)\eta(y)\xi-\eta(x)\eta(z)y,\\
T(x,y)z&=-g(y,z)\eta(x)\xi+g(x,z)\eta(y)\xi+\eta(y)\eta(z)x-\eta(x)\eta(z)y,
\end{align*}
from which it follows that the statement $(b)$ is equivalent to
\begin{equation}\label{eq:b'}
F(x,y,z,w)=g(x,w)g(y,z)-g(y,w)g(x,z).
\end{equation}
On the other hand, it is clear that, up to a non-vanishing multiplicative factor, any lightlike vector $u\in T_pM$ can be written as $u=\xi\pm x$, with $x\in\mathrm{Im}(\varphi)$, $g(x,x)=1$. This is equivalent to saying that $N(\xi)=N_\varphi(\xi)$ is the set of all the lightlike vector of $T_pM$. Therefore, any degenerate plane $\pi$ in $T_pM$ is spanned by $u\in N(\xi)$ and $y\in u^\bot\cap\mathrm{Im}(\varphi)$, hence the statement $(a)$ is equivalent to requiring that $F$ vanishes on any degenerate plane in $T_pM$. By Lemma \ref{degenere}, this is equivalent to requiring that there exists $k\in\R$ such that
\begin{equation}\label{eq:a'}
F(x,y,z,w)=k\{g(y,w)g(x,z)-g(x,w)g(y,z)\},
\end{equation}
for any $x,y,z\in T_pM$. The above formula and \eqref{eq:18} yield $-k=F(x,\xi,x,\xi)=1$, for any unit $x\in\mathrm{Im}(\varphi)$. Thus, \eqref{eq:a'} is equivalent to \eqref{eq:b'}, that is $(a)\Leftrightarrow(b)$.

Now, suppose $s\geqslant2$. It is obvious that $(b)$ implies $(a)$, since 
\begin{align*}
	g(T(u,y)u,y)&=\tilde\eta(u)\tilde\eta(u)g(\varphi^2 y,y)=-g(y,y),\\
	g(S(u,y)u,y)&=g(\varphi u,\varphi u)g(\varphi^2 y,y)=-g(y,y),
\end{align*}
for any $u\in N_\varphi(\xi_1)$ and $y\in\mathrm{Im}(\varphi)$.

Conversely, assume that $(a)$ holds and let $H\colon(T_pM)^4\rightarrow\R$ be the curvature-like map such that, for any $x,y,z,w\in T_p M$
\begin{equation}\label{Hdef}
H(x,y,z,w)=F(x,y,z,w)-g(S(x,y,v),w)+g(T(x,y,v),w).
\end{equation}
We are going to see that $H=0$, by using Lemma \ref{degenere}. Therefore, we have to calculate $H$ on any degenerate plane $\pi=\mathrm{span}(u,y)$, the lightlike vector $u$ being of either the first kind, or the second kind, and $y\in u^\bot$. First, note that \eqref{eq:18} clearly implies that 
\begin{equation}\label{eq:19}
F(x,v',y,v'')=\tilde\eta(v')\tilde\eta(v'')g(\varphi x,\varphi y)\qquad\text{and}\qquad F(x,\xi_\alpha,\xi_\beta,\xi_\gamma)=0,
\end{equation}
for any $x,y\in T_pM$, any $v',v''\in\mathrm{span}(\xi_1,\ldots,\xi_s)$ and any $\alpha,\beta,\gamma\in\{1,\ldots,s\}$. Now, suppose that $u$ is of the first kind and that $y\in u^\bot$ is given by \eqref{y1}. Using \eqref{eq:19}, we have
\begin{equation}\label{H1}
\begin{split}
F(u,y,u,y)&=b^2F(u,y',u,y')+\lambda^i\lambda^jF(u,w_i,u,w_j)+2\lambda^ibF(u,y',u,w_i)\\
	        &=b^2F(u,y',u,y')+\lambda^i\lambda^j\tilde\eta(w_i)\tilde\eta(w_j)g(\varphi u,\varphi u)\\
	        &=b^2F(u,y',u,y')+g(\varphi u,\varphi u)(\tilde\eta(y)-a\tilde\eta(u))^2,
\end{split}
\end{equation}
since, by \eqref{eq:18} and the First Bianchi Identity, $F(u,y',u,w_i)=0$. Now, using (\ref{1}) and \eqref{eq:19}, we get
\begin{align*}
	F(u,y',u,y')&=t^2F(x,y',x,y')+F(w_0,y',w_0,y')\\
	            &=g(\varphi u, \varphi u)F(x,y',x,y')+F(w_0,y',w_0,y')\\
	            &=g(\varphi u, \varphi u)F(x,y',x,y')+\tilde\eta(w_0)^2g(y',y')\\
	            &=g(\varphi u, \varphi u)F(x,y',x,y')+\tilde\eta(u)^2g(y',y').
\end{align*}
Substituting the above formula in (\ref{H1}), we get
\begin{align*}
	F(u,y,u,y)&=b^2g(\varphi u, \varphi u)F(x,y',x,y')+b^2\tilde\eta(u)^2g(y',y')\\
	          &\quad+g(\varphi u,\varphi u)(\tilde\eta(y)-a\tilde\eta(u))^2.
\end{align*}
Let us now put $u'=x+\xi_1$. We see that $u'\in N_\varphi(\xi_1)$ and that $y'\in\mathrm{Im}(\varphi)\cap u'^\bot$. Since $F(u',y',u',y')=F(x,y',x,y')+g(y',y')$, from the above formula we obtain
\begin{equation}\label{F1}
\begin{split}
	F(u,y,u,y)&=b^2g(\varphi u, \varphi u)F(u',y',u',y')-b^2g(y',y')g(\varphi u, \varphi u)\\
	          &\quad+b^2\tilde\eta(u)^2g(y',y')+g(\varphi u,\varphi u)(\tilde\eta(y)-a\tilde\eta(u))^2.
\end{split}
\end{equation}
Expanding $g(S(u,y)u,y)$, we get
\begin{equation}\label{TS1}
\begin{split}
	g(S(u,y)u,y)&=g(\varphi u, \varphi u)\{a^2 g(\varphi^2u,u)+b^2g(\varphi^2y',y')\}\\
	            &\qquad-a^2 g(\varphi u,\varphi u)g(\varphi^2 u,u)\\
	            &=-b^2 g(\varphi u,\varphi u)g(y',y').
\end{split}
\end{equation}
Analogously, expanding $g(T(u,y)u,y)$, we have
\begin{equation}\label{TS2}
\begin{split}
	g(T(u,y)u,y)&=-\tilde\eta(u)^2\{a^2g(\varphi u, \varphi u)+b^2g(\varphi y',\varphi y')\}\\
	            &\qquad+2a g(\varphi u,\varphi u)\tilde\eta(u)\tilde\eta(y)-g(\varphi u,\varphi u)\tilde\eta(y)^2\\
	            &=-b^2g(y',y')\tilde\eta(u)^2\\
	            &\qquad-g(\varphi u,\varphi u)\{a^2\tilde\eta(u)^2-2a\tilde\eta(u)\tilde\eta(y)+\tilde\eta(y)^2\}\\
	            &=-b^2g(y',y')\tilde\eta(u)^2-g(\varphi u,\varphi u)(a\tilde\eta(u) -\tilde\eta(y))^2.
\end{split}
\end{equation}
Now \eqref{F1}, \eqref{TS1} and \eqref{TS2} imply $H(u,y,u,y)=b^2g(\varphi u, \varphi u)F(u',y',u',y')$, where $u'\in N_\varphi(\xi_1)$ and $y'\in u'^\bot\cap\mathrm{Im}\varphi$. Thus $(a)$ yields $H(u,y,u,y)=0$ when $u$ is a lightlike vector of the first kind.

Now, suppose that $u$ is a lightlike vector of the second kind and that $y\in u^\bot$ is given by \eqref{y2}. Using \eqref{2}, with analogous computations as above, we find
\begin{align*}
F(u,y,u,y)&=b^2F(u,y',u,y')+\mu^i\mu^jF(u,w'_i,u,w_j')+2\mu^ibF(u,y',u,w_i')\\
          &=b^2g(y',y')\tilde\eta(u)^2.
\end{align*}
Moreover, $\varphi u=0$, hence $g(S(u,y)u,y)=0$ and, with straightforward calculations,
\[
g(T(u,y)u,y)=\tilde\eta(u)\tilde\eta(u)g(\varphi^2 y, y)=-b^2g(y',y')\tilde\eta(u)^2.
\]
The above identities imply $H(u,y,u,y)=0$ when $u$ is a lightlike vector of the second kind.

Thus, $H$ vanishes on any degenerate $2$-plane. By Lemma \ref{degenere}, there exists $k\in\R$ such that $H(x,y,z,w)=k\{g(y,z)g(x,w)-g(x,z)g(y,w)\}$, for any $x,y,z,w\in T_pM$. It follows that $H(x,\xi_\alpha,x,\xi_\alpha)=-k\varepsilon_\alpha$, for any unit $x\in \mathrm{Im}(\varphi)$ and any $\alpha\in\{1,\ldots,s\}$. On the other hand, \eqref{Hdef} yields $H(x,\xi_\alpha,x,\xi_\alpha)=0$, hence $H(x,y,z,w)=0$, for any $x,y,z,w\in T_p M$, and $(b)$ follows. \end{proof}

\begin{remark}
It is worth noting that the analogous result stated in \cite{BR} in the case of a Lorentzian $\mathcal{S}$-manifold with exactly two characteristic vector fields is a particular case of the above lemma. Indeed, using the notations of \cite{BR}, the tensor $S_*$ is nothing but $S$, and a long, but straightforward calculation shows that the tensor $T$ reduces to $S^*$, when the manifold has only two characteristic vector fields.
\end{remark}

From \cite{GSV} it is well-known that, on an even-dimensional Osserman Riemannian manifold $(M,g)$, whose Jacobi operators $R_x$, $x\in S(M)$ admit exactly two eigenvalues $c_1,c_2\in\R$, with multiplicities $1$ and $\dim(M)-2$, it is possible to define an almost Hermitian structure $J$ by associating to each $x\in S(M)$ the unit eigenvector $Jx$ of $R_x$ with respect to the eigenvalue $c_1$ with multiplicity $1$ (see also \cite{GKV}). In \cite[Lemma 3.5.1, p.\ 202]{G} one can find the above result stated in a purely algebraic setting.

In \cite{BR}, the construction has been adapted to the case of a $(2n+s)$-dimensional $\varphi$-null Osserman Lorentzian $\mathcal{S}$-manifold $(M,\varphi,\xi_\alpha,\eta^\alpha,g)$ at a point $p\in M$, with Jacobi operators $\bar R_u^\varphi$, $u\in N_\varphi((\xi_1)_p)$, admitting eigenvalues as above. In this way, an almost Hermitian structure is obtained on $\mathrm{Im}(\varphi_p)$.

Now, let us use that construction here, and introduce the $(1,3)$-type tensor $R^0$ and $R^J$ on $T_pM$, defined by
\begin{align*}
R^0(x,y)z&=g(\varphi^2y,\varphi^2z)\varphi^2x-g(\varphi^2x,\varphi^2z)\varphi^2y,\\
R^J(x,y)z&=g(J\varphi^2y,\varphi^2z)J\varphi^2x-g(J\varphi^2x,\varphi^2z)J\varphi^2y+2g(\varphi^2x,J\varphi^2y)J\varphi^2z
\end{align*}
for any $x,y,z\in T_pM$. Using exactly the same proof of \cite{BR}, except for the use of Lemma \ref{Lemma:01}, of course, and using the properties of Lemma \ref{lem:99}, we obtain the following curvature characterization.

\begin{theorem}
Let $(M,\varphi,\xi_\alpha,\eta^\alpha,g)$ be a Lorentzian $\mathcal{S}$-manifold, $\dim(M)=2n+s$, $n>1$, $s\geqslant1$. The following statements are equivalent.
\begin{itemize}
	\item [(a)] $M$ is $\varphi$-null Osserman at a point $p$ and for any $u\in N_\varphi((\xi_1)_p)$ the Jacobi operators $\bar R_u^\varphi$ admit exactly two distinct eigenvalues $c_1,c_2\in\R$, with multiplicities $1$ and $2n-2$, respectively. 
	\item [(b)] There exists an almost Hermitian structure $J$ on $\mathrm{Im}(\varphi_p)$ and  $c_1,c_2\in\R$, such that
	\begin{equation*}
	R_p(x,y)z=T_p(x,y)z-S_p(x,y)x-c_2R^0(x,y)z-\frac{c_1-c_2}{3}\,R^J(x,y)z,
	\end{equation*}
	for any $x,y,z\in T_p M$.
\end{itemize}
\end{theorem}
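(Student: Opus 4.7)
The plan is to mirror the argument of \cite{BR} for the two-characteristic-vector-field case, replacing its algebraic ingredient with the fully general Lemma \ref{Lemma:01} and exploiting the curvature identities of Lemma \ref{lem:99}. I would prove the two implications separately.

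For (b) $\Rightarrow$ (a), it suffices to compute $R_p(y, u)u$ directly for $u = \xi_1 + x \in N_\varphi((\xi_1)_p)$, with unit $x \in \mathrm{Im}(\varphi_p)$, and $y \in V := x^\perp \cap \mathrm{Im}(\varphi_p)$. A routine term-by-term evaluation, using $\varphi^2 \xi_1 = 0$, $\varphi^2 u = -x$, $\tilde\eta(u) = -1$ and $\tilde\eta(y) = 0$, shows that the $T$ and $S$ contributions cancel while the remaining two terms collapse to
\[
R_p(y, u)u = c_2\, y - (c_1 - c_2)\, g(Jy, x)\, Jx.
\]
Projecting onto $V$ and passing to the quotient thus displays $\overline{Jx}$ as the unique (up to scalar) $c_1$-eigenvector of $\bar R_u^\varphi$ and the orthogonal complement in $\bar V$ as the $c_2$-eigenspace, yielding the required multiplicities $1$ and $2n - 2$.

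For (a) $\Rightarrow$ (b), the first step is to construct the almost Hermitian structure $J$ on $\mathrm{Im}(\varphi_p)$ by assigning to each unit $x$ a unit generator $Jx$ of the one-dimensional $c_1$-eigenspace of $\bar R_u^\varphi$ with $u = \xi_1 + x$, and extending linearly; the fact that $J$ so defined is a genuine almost Hermitian structure follows from the Gilkey--Stanilov--Videv algebraic argument, already carried out in \cite{BR}. I would then form the curvature-like map
\[
F(x, y, z, w) := R_p(x, y, z, w) + c_2\, g(R^0(x, y)z, w) + \tfrac{c_1 - c_2}{3}\, g(R^J(x, y)z, w),
\]
and check that $F$ satisfies the hypotheses (\ref{eq:18}) of Lemma \ref{Lemma:01}: since $R^0$ and $R^J$ factor through $\varphi^2$ and thus vanish whenever any $\xi_\alpha$ is inserted in the appropriate slot, the required identities for $F$ reduce to the corresponding identities for $R_p$, which are exactly Lemma \ref{lem:99}(2) and (4) combined with $\varphi^3 = -\varphi$.

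The remaining step is to verify that $F(u, y, u, y) = 0$ for every $u \in N_\varphi((\xi_1)_p)$ and every $y \in u^\perp \cap \mathrm{Im}(\varphi_p)$, whereupon Lemma \ref{Lemma:01} concludes that $F(x,y,z,w) = g(S(x,y)z, w) - g(T(x,y)z, w)$ and a rearrangement yields (b). Writing $u = \xi_1 + x$ and decomposing $y = \lambda\, Jx + w$ with $w \in V \cap (Jx)^\perp$, the Osserman eigenvalue data combined with the identities used in the (b) $\Rightarrow$ (a) calculation deliver the required cancellation. The principal obstacle is the construction and algebraic validation of $J$ on $\mathrm{Im}(\varphi_p)$ from the Osserman data; once $J$ is available, the remaining verifications amount to tensorial bookkeeping controlled by Lemma \ref{lem:99} and the explicit formulas for $T$, $S$, $R^0$ and $R^J$.
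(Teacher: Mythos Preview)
Your proposal is correct and follows essentially the same route as the paper, which explicitly states that the proof is ``exactly the same proof of \cite{BR}, except for the use of Lemma \ref{Lemma:01}, \ldots\ and using the properties of Lemma \ref{lem:99}.'' Your outline of constructing $J$ via the Gilkey--Swann--Vanhecke mechanism, defining the auxiliary curvature-like map $F$, reducing its $\xi_\alpha$-identities to Lemma \ref{lem:99}, and then invoking Lemma \ref{Lemma:01} is precisely that strategy, and your explicit verification for (b)$\Rightarrow$(a) is accurate.
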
 

\begin{remark}
In particular, for $s=1$, when the $\varphi$-null Osserman condition at a point is nothing but the null Osserman condition on a Lorentzian almost contact manifold, we have seen that $T_p(x,y)z-S_p(x,y)z=g_p(x,z)y-g_p(y,z)x$, for any $x,y,z\in T_pM$. Using this, we obtain the following final result as a corollary of the above theorem.
\end{remark}

\begin{corollary}
Let $(M,\varphi,\xi,\eta,g)$ be a Lorentz Sasakian manifold, $\dim(M)=2n+1$, $n>1$. The following statements are equivalent.
\begin{itemize}
	\item [(a)] $M$ is null Osserman at a point $p$ and for any $u\in N_\varphi(\xi_p)$ the Jacobi operators $\bar R_u$ admit exactly two distinct eigenvalues $c_1,c_2\in\R$, with multiplicities $1$ and $2n-2$, respectively. 
	\item [(b)] There exists an almost Hermitian structure $J$ on $\mathrm{Im}(\varphi_p)$ and  $c_1,c_2\in\R$, such that
	\begin{equation*}
	R_p(x,y)z=g_p(x,z)y-g_p(y,z)x-c_2R^0(x,y)z-\frac{c_1-c_2}{3}\,R^J(x,y)z,
	\end{equation*}
	for any $x,y,z\in T_p M$.
\end{itemize}
\end{corollary}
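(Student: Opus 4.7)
The plan is to derive this corollary as an immediate specialization of the preceding Theorem to the case $s=1$, exploiting the simplifications which occur in the Lorentz Sasakian setting. A Lorentz Sasakian manifold $(M,\varphi,\xi,\eta,g)$ of dimension $2n+1$ is precisely a Lorentzian $\mathcal{S}$-manifold with $s=1$, and the hypothesis $n>1$ matches that of the Theorem. As already observed in the paper, when $s=1$ one has $N_\varphi(\xi_p)=N(\xi_p)$ and $S_\varphi(\xi_p)=\mathrm{Im}(\varphi_p)$, so the $\varphi$-null Osserman condition at $p$ reduces to the classical null Osserman condition there. Moreover the subspace $U=\mathrm{span}(\xi_2,\ldots,\xi_s)$ appearing in the decomposition $\bar R_u=\bar R_u|_V\circ p_V+\bar R_u|_U\circ p_U$ is empty in this case, so $\bar R_u^\varphi=\bar R_u$. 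Consequently, condition (a) of the corollary is literally condition (a) of the Theorem applied with $s=1$.

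Next, I would invoke the Theorem to conclude that (a) is equivalent to the existence of an almost Hermitian structure $J$ on $\mathrm{Im}(\varphi_p)$ and of real numbers $c_1,c_2$ for which
\[
R_p(x,y)z = T_p(x,y)z - S_p(x,y)z - c_2\,R^0(x,y)z - \frac{c_1-c_2}{3}\,R^J(x,y)z,
\]
for all $x,y,z\in T_pM$. To match this with the formula claimed in (b), I would substitute the identity $T_p(x,y)z-S_p(x,y)z=g_p(x,z)y-g_p(y,z)x$, which is exactly what the remark immediately preceding the corollary records. This identity can be checked directly from the explicit expansions of $S(x,y)z$ and $T(x,y)z$ written out in the proof of Lemma \ref{Lemma:01} for $s=1$: subtracting those two formulas, the terms involving $\eta(x)\xi$, $\eta(y)\xi$, $\eta(y)\eta(z)x$ and $\eta(x)\eta(z)y$ cancel pairwise, and what survives is precisely $g(x,z)y-g(y,z)x$. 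Inserting this simplification yields the curvature expression in (b), and the reverse implication is obtained by reading the same chain of identifications backwards.

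No substantial obstacle is expected, since the corollary is a specialization of an already-proven equivalence rather than an independent assertion. The only real work is the one-line algebraic verification of $T_p-S_p=g_p(x,z)y-g_p(y,z)x$ in the $s=1$ case, and then the careful identification of the $\varphi$-null Osserman data for $s=1$ with the classical null Osserman data on a Lorentz Sasakian manifold, both of which are already explicitly flagged in the text preceding the corollary.
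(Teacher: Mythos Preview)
Your proposal is correct and follows essentially the same approach as the paper: the corollary is obtained by specializing the preceding Theorem to $s=1$, invoking the identifications $N_\varphi(\xi_p)=N(\xi_p)$ and $\bar R_u^\varphi=\bar R_u$, and substituting the identity $T_p(x,y)z-S_p(x,y)z=g_p(x,z)y-g_p(y,z)x$ recorded in the remark just before the corollary.
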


\end{document}